\documentclass[12pt,a4paper, oneside, reqno]{amsart}

\usepackage[foot]{amsaddr} 

\usepackage{amsmath,amsthm,amssymb,bbm,bm}
\usepackage[bookmarksnumbered=false, colorlinks=true,citecolor=red,urlcolor=green,linkcolor=blue]{hyperref}
\usepackage{caption}
\usepackage{enumitem}
\usepackage{mleftright}
\usepackage{csquotes}
\usepackage{graphicx,float}
\usepackage{tikz}
\usepackage[a4paper,margin=2.5cm]{geometry}

\usepackage[labelformat=simple]{subcaption}

\usepackage{mathtools}
\mathtoolsset{showonlyrefs}

\theoremstyle{plain}
\newtheorem{theorem}{Theorem}[section]

\newtheorem{lemma}[theorem]{Lemma}
\newtheorem{proposition}[theorem]{Proposition}
\theoremstyle{definition}
\newtheorem{remark}[theorem]{Remark}

 \newcommand{\D}[1]{\mathop{\mathrm{d}#1}}
\DeclareMathOperator*{\conv}{conv}
\DeclareMathOperator*{\strh}{star}
\DeclareMathOperator*{\Per}{Per}
\DeclareMathOperator*{\Area}{Area}
\DeclareMathOperator*{\Si}{Si}
\DeclareMathOperator*{\Ci}{Ci}

\newcommand{\eps}{\varepsilon}
\newcommand{\sfP}{\mathsf{P}}
\newcommand{\sfA}{\mathsf{A}}

\newcommand{\bbE}{\mathbb{E}}
\newcommand{\bbR}{\mathbb{R}}
\newcommand{\bbP}{\mathbb{P}}
\newcommand{\bbC}{\mathbb{C}}
\newcommand{\bbD}{\mathbb{D}}

\newcommand{\cA}{\mathcal{A}}
\newcommand{\cC}{\mathcal{C}}
\newcommand{\cD}{\mathcal{D}}
\newcommand{\cE}{\mathcal{E}}
\newcommand{\cH}{\mathcal{H}}
\newcommand{\cK}{\mathcal{K}}

\newcommand{\cS}{\mathcal{S}}
\newcommand{\cV}{\mathcal{V}}
\newcommand{\bfW}{\boldsymbol{W}}
\bmdefine\be{\mathrm{e}}
\newcommand{\argmax}{\mathop{\mathrm{argmax}}}

\usepackage[draft]{todonotes} 
\title[Perimeter and area of Brownian convex hull upon exiting the unit disk]{Expected perimeter and area of the convex hull of planar Brownian motion stopped upon exiting the unit disk}

\author[R.\ Mrazovi\'{c}]{Rudi Mrazovi\'{c} $^{1}$}
\address{$^{1}$University of Zagreb Faculty of Science, Zagreb, Croatia}
\email{rudi.mrazovic@math.hr}

\author[H.\ Panzo]{Hugo Panzo $^{2}$}
\address{$^{2}$Department of Mathematics and Statistics, Saint Louis University, St.\ Louis, USA}
\email{hugo.panzo@slu.edu}

\author[S.\ \v{S}ebek]{Stjepan\ \v{S}ebek $^{3}$}
\address{$^{3}$University of Zagreb Faculty of Electrical Engineering and Computing, Zagreb, Croatia}
\email{stjepan.sebek@fer.unizg.hr}

\subjclass[2010]{
Primary 
60D05
, 60J65
; Secondary 30C20
, 52A10
.}
\keywords{Blaschke's area formula, planar Brownian motion, Cauchy's surface area formula, convex hull, exit time, harmonic measure, star hull, topological hull}
\begin{document}

\begin{abstract}
We study the convex hull of planar Brownian motion run until the exit time from the unit disk. Our primary objectives are to compute the expected perimeter and expected area of this convex hull, thereby complementing recent results on the convex hull of reflecting Brownian motion in confined geometries. We reduce the problem of computing the expected perimeter to computing the expected value of the Brownian motion's maximum horizontal displacement at the exit time, and then recast this maximum in terms of harmonic measure in a domain we call the \emph{truncated disk}. The problem of computing the expected area is reduced to computing the expected value of the difference of squares of the Brownian motion's maximum horizontal displacement at the exit time, and the value of the vertical displacement at the time this maximum horizontal displacement is achieved. In particular, we obtain exact expressions for both the expected perimeter and the expected area. We conclude with further results on the expected areas of two related hulls of the Brownian path run until exiting the disk, namely, the \emph{star hull} and \emph{topological hull}.
\end{abstract}

\maketitle


\section{Introduction and Main Results}

Let $\bfW=(\bfW_t)_{t \geq 0} = (X_t, Y_t)_{t \geq 0}$ be the standard planar Brownian motion on $\bbR^2$ started at the origin $\boldsymbol{0}$. For a set $\cA \subset \bbR^2$, let $\conv \cA$ denote the convex hull spanned by $\cA$, that is, the intersection of all convex subsets of $\bbR^2$ containing $\cA$. For $t \geq 0$, let $\cH_t = \conv\bfW_{[0, t]}$ be the convex hull generated by a path of $\bfW$ run until time $t$. Denote the corresponding perimeter process by $\sfP_t = \Per(\cH_t)$, where $\Per(\cA)$ stands for the perimeter of the set $\cA \subset \bbR^2$, and the corresponding area process by $\sfA_t = \Area(\cH_t)$, where $\Area(\cA)$ stands for the area of the set $\cA \subset \bbR^2$. Moreover, let	$\bbD= \{\boldsymbol{x} \in \bbR^2 : \|\boldsymbol{x}\| < 1\}$ be the open unit disk and let
\begin{equation}\label{eq:disk_exit}
\tau_\bbD = \inf\{t \geq 0 : \bfW_t \notin \bbD\}
\end{equation}
denote the first exit time of $\bfW$ from $\bbD$. Our objects of interest are the random variables $\sfP_{\tau_{\bbD}}$ and $\sfA_{\tau_{\bbD}}$, namely, the perimeter and area of the convex hull of the trajectory of the Brownian motion $\bfW$ run until the first time it hits the boundary $\partial \bbD$; see Figure \ref{fig:convex-hull}. In particular, we want to compute the expected perimeter $\bbE[\sfP_{\tau_{\bbD}}]$ and expected area $\bbE[\sfA_{\tau_{\bbD}}]$.

\begin{figure}[ht]
  \centering
  \includegraphics[width=0.4\textwidth]{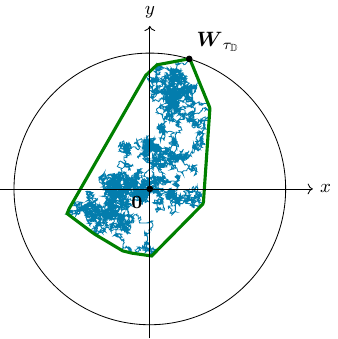}
  \caption{The convex hull (depicted in green) of a standard planar Brownian motion path starting at $\boldsymbol{0}$ and run until it exits the unit disk at $\bfW_{\tau_{\bbD}}$.}
  \label{fig:convex-hull}
\end{figure}
Similar questions have already been considered in the literature, but, to the best of our knowledge, not for paths of Brownian motion run until some random time. The problem of finding the expected value of the random variable $\sfP_1$ was proposed by Letac in 1978 \cite{Letac}, and then solved by Tak\'{a}cs in 1980 \cite{Takacs}. It holds that $\bbE[\sfP_1] = \sqrt{8\pi}$. When it comes to the expected area of the Brownian convex hull run up to time $1$, a result of El Bachir in \cite{ElBachir_PhD_thesis} shows that $\bbE[\sfA_1] = \pi/2$. These results were later generalized to the standard planar Brownian bridge \cite{goldman, Majumdar}, the union of independent standard planar Brownian motions \cite{Majumdar}, the union of independent standard planar Brownian bridges \cite{Majumdar}, and various combinations of independent standard planar Brownian motions and bridges \cite{sebek}. 

Another motivation for our work is a recent paper by De~Bruyne, B\'{e}nichou, Majumdar, and Schehr \cite{DeBruyneBenichouMajumdarSchehr}, where they considered Brownian motion confined to a $d$-dimensional ball with reflecting (Neumann) boundary conditions, and analyzed the statistics of the maximum of the trajectory along a fixed direction, as well as the growth of the convex hull. In the planar case $d = 2$, they obtained a precise large-time asymptotic for the mean perimeter $\bbE[\sfP_t]$ of the convex hull of a Brownian particle reflected at the boundary of a disk of radius $R$, showing that it converges to the perimeter $2\pi R$ of the circle with a stretched-exponential correction. Their predictions have been revisited and further developed in a rigorous probabilistic setting by Haas and Mallein \cite{HaasMallein}, via the analysis of a certain inhomogeneous fragmentation process. 

In the present paper we consider a complementary situation. Instead of a reflecting Brownian motion evolving for a long deterministic time inside the disk, we study planar Brownian motion killed when it first hits the boundary (Dirichlet boundary conditions). The exit time $\tau_{\bbD}$ is now an almost surely finite, random lifetime, and we are interested in the convex hull of the single excursion of the Brownian path from the center to the boundary of $\bbD$. Our main results give explicit representations for the expected perimeter and area of this ``Dirichlet Brownian hull''.

\begin{theorem}\label{thm:main}
Define the maximum horizontal displacement of the standard planar Brownian motion $\bfW=(\bfW_t)_{t \geq 0} = (X_t, Y_t)_{t \geq 0}$ at the time it first exits from the unit disk by
\begin{equation}\label{eq:horizontal_max}
M = \sup_{0 \leq t \leq \tau_\bbD} X_t.
\end{equation}
We have that the cumulative distribution function of $M$ is given by
	\begin{equation}\label{eq:cdf_M}
		\bbP(M < a) = \frac{2\arcsin a}{\pi - \arccos a}, \qquad 0<a<1.
	\end{equation}
	Furthermore,
	\begin{equation}\label{eq:exp_M}
		\bbE[M] = \int_0^{\pi/2}\left(1 - \frac{2t}{\frac{\pi}{2}+t}\right)\cos t\D{t} \approx 0.511655.
	\end{equation}
	Consequently, the expected perimeter of the convex hull of $\bfW$ when it first exits $\bbD$ is
	\begin{equation}\label{eq:exp_per}
		\bbE[\sfP_{\tau_{\bbD}}] = 2\pi \,\bbE[M] \approx 3.214826.
	\end{equation}
\end{theorem}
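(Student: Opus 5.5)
The proof has three parts: the law of $M$ via harmonic measure, the integral for $\bbE[M]$, and Cauchy's formula for the perimeter.

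\textbf{Distribution of $M$.} Fix $0<a<1$. The event $\{M<a\}$ says that the path exits $\bbD$ before it ever reaches the vertical line $\{x=a\}$. Define the truncated disk $\cD_a=\bbD\cap\{x<a\}$. Then $\{M<a\}$ is exactly the event that Brownian motion started at $\boldsymbol{0}$ leaves $\cD_a$ through the circular arc $\partial\bbD\cap\{x<a\}$ rather than through the chord $\{x=a\}\cap\bbD$. So $\bbP(M<a)$ is the harmonic measure of that arc in $\cD_a$, seen from $\boldsymbol{0}$.

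The arc and the chord meet at the two corners $(a,\pm\sqrt{1-a^2})$. There the interior angle between the circle and the line equals $\pi-\arccos a$, or its complement, depending on convention. I would compute the harmonic measure with a conformal map.
\begin{enumerate}
\item Send one corner to $0$ and the other to $\infty$ by a Möbius transformation. This turns $\cD_a$ into a wedge $\{0<\arg z<\alpha\}$, whose opening angle $\alpha$ equals the interior corner angle of $\cD_a$.
\item In that wedge, the harmonic measure of one bounding ray, evaluated at a point $z$, is $\arg z/\alpha$ (up to orientation).
\item Track the image of the origin. I expect the arc-side ray to be at angular distance $2\arcsin a$, or equivalently $\pi-2\arccos a$, from the image of $\boldsymbol{0}$, with total opening $\alpha=\pi-\arccos a$.
\end{enumerate}
This yields $2\arcsin a/(\pi-\arccos a)$. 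As checks, the value tends to $0$ as $a\to0$ and to $1$ as $a\to1$, where the opening is $\pi$ and the angle is $\pi$.

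The main obstacle is the bookkeeping in step 3: identifying the wedge angle and the angular position of the image of $\boldsymbol{0}$ correctly. A concrete way to do it is to use the inscribed-angle relation. The harmonic measure in a lune bounded by two circular arcs (here one is a line) through points $p,q$ is $(\theta(z)-\theta_0)/\alpha$, where $\theta(z)$ is the angle at $z$ subtended by the segment from $p$ to $q$.

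\textbf{Expected value of $M$.} Since $0\le M\le 1$, we have $\bbE[M]=\int_0^1\bbP(M\ge a)\D{a}$. Substitute $a=\sin t$, so that $\arcsin a=t$ and $\arccos a=\pi/2-t$. Then $\pi-\arccos a=\pi/2+t$, and the integral becomes \eqref{eq:exp_M}. The numerical value follows by quadrature.

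\textbf{Perimeter.} Use Cauchy's surface area formula: $\Per(\cK)=\int_0^{2\pi}h_\cK(\theta)\D{\theta}$ for a convex body $\cK$ with support function $h_\cK$. The support function of $\cH_{\tau_\bbD}$ in direction $\theta$ is $\sup_{t\le\tau_\bbD}\langle\bfW_t,(\cos\theta,\sin\theta)\rangle$. By rotational invariance of both $\bfW$ and $\bbD$ about $\boldsymbol{0}$, this has the same law as $M$ for every $\theta$. Tonelli's theorem then gives $\bbE[\sfP_{\tau_\bbD}]=2\pi\,\bbE[M]$, which is \eqref{eq:exp_per}.
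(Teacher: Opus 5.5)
Your route is the paper's route. You identify $\{M<a\}$ with exiting $\cD_a$ through $\cC_a$, then use a M\"obius map sending the corners to $0$ and $\infty$ (the paper's $\mathfrak{l}_a(z)=(z-z_a^-)/(z_a^+-z)$) to turn $\cD_a$ into a wedge of opening $\pi-\arccos a$, and use that wedge harmonic measure is linear in $\arg$. The tail integral with $a=\sin t$, and Cauchy's formula with rotational invariance and Tonelli, then finish exactly as in the paper.

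The one real problem is step 3. That step is where the actual computation happens, you leave it as an expectation, and the expectation as written is backwards. The image of $\boldsymbol{0}$ lies at angular distance $2\arcsin a$ from the \emph{chord}-side ray and at distance $\arccos a$ from the arc-side ray. By your step 2, the harmonic measure of a bounding ray is the angular distance to the \emph{other} ray divided by $\alpha$. Your claim as stated would therefore give $\omega^0_{\cD_a}(\cC_a)=\arccos a/(\pi-\arccos a)$, which tends to $1$ as $a\to 0$. That is absurd, since the origin is then at distance $a$ from the chord. Your final formula is right, but it does not follow from the bookkeeping you wrote.

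To close the step, put $\varphi=\arccos a$, so $z_a^\pm=e^{\pm i\varphi}$. Then:
\begin{itemize}
\item $\mathfrak{l}_a(a)=1$, so the chord goes to $[0,\infty]$;
\item $\mathfrak{l}_a(-1)=-e^{-i\varphi}=e^{i(\pi-\varphi)}$, so the circle goes to the line at angle $\pi-\varphi$;
\item $\mathfrak{l}_a(0)=-z_a^-/z_a^+=e^{i(\pi-2\varphi)}$.
\end{itemize}
Hence $\omega^0_{\cD_a}(\cC_a)=(\pi-2\varphi)/(\pi-\varphi)=2\arcsin a/(\pi-\arccos a)$. Your inscribed-angle variant also works once its values are pinned down. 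The segment $[z_a^-,z_a^+]$ is seen under angle $\pi$ from the chord, $\varphi$ from $\cC_a$, and $2\varphi$ from the origin, which gives the same ratio $(\pi-2\varphi)/(\pi-\varphi)$.
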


\begin{remark}\label{rem:Si_function}
Let $\Si(z)$ denote the \emph{sine integral}. This is the entire function defined by
	\begin{equation*}
		\Si(z) = \int_0^z \frac{\sin u}{u}\D u, \qquad z \in \mathbb{C};
	\end{equation*}
	see \cite[Equation 6.2.9]{DLMF}. We can use this special function to express our main result from Theorem \ref{thm:main} in a more compact way. More precisely, substituting $u=t+\frac{\pi}{2}$ in \eqref{eq:exp_M} results in
	\begin{equation*}
		\bbE[M] = \pi \big(\Si(\pi) - \Si(\pi/2)\big) - 1.
	\end{equation*}
	Therefore
	\begin{equation*}
		\bbE[\sfP_{\tau_{\bbD}}] = 2\pi^2 \big(\Si(\pi) - \Si(\pi/2)\big) - 2\pi.
	\end{equation*}
\end{remark}

\begin{theorem}\label{thm:area}
Define the almost surely unique random time at which the maximum horizontal displacement $M$ is attained by
\begin{equation}\label{eq:def_of_T}
    T = \argmax_{0 \le t \le \tau_\bbD} X_t.
\end{equation}
We show that
\begin{equation}\label{eq:exp_M2-YT2}
    \bbE[M^2 - Y_T^2] = 1 - \log 2 + \int_{\pi}^{2\pi} \frac{\cos u}{u} \D u \approx 0.210624.
\end{equation}
Consequently, the expected area of the convex hull of $\bfW$ when it first exits $\bbD$ is
	\begin{equation}\label{eq:exp_area}
		\bbE[\sfA_{\tau_{\bbD}}] = \pi \,\bbE[M^2 - Y_T^2] \approx 0.661695.
	\end{equation}
\end{theorem}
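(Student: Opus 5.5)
The plan has two parts. First, reduce $\bbE[\sfA_{\tau_\bbD}]$ to $\pi\,\bbE[M^2-Y_T^2]$ via a support-function (Blaschke-type) area formula. Second, compute $\bbE[M^2]$ and $\bbE[Y_T^2]$ separately from the harmonic measure of the truncated disk, reusing the machinery behind \eqref{eq:cdf_M}.

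\textbf{Step 1 (area formula).} For a compact convex set $K$ with $\boldsymbol{0}\in K$, the support function $h(\theta)=\max_{\boldsymbol{x}\in K}\langle \boldsymbol{x},\be_\theta\rangle$ is Lipschitz, hence differentiable a.e. At such $\theta$ the face in direction $\be_\theta$ is a single point $\boldsymbol{x}_\theta$ with $h'(\theta)=\langle \boldsymbol{x}_\theta,\be_\theta^\perp\rangle$. The classical formula is
\[
\Area(K)=\frac12\int_0^{2\pi}\bigl(h(\theta)^2-h'(\theta)^2\bigr)\D\theta .
\]
Apply it to $K=\cH_{\tau_\bbD}$. Here $h(\theta)=\sup_{t\le\tau_\bbD}\langle\bfW_t,\be_\theta\rangle$, and $\boldsymbol{x}_\theta=\bfW_{T_\theta}$, where $T_\theta$ is the argmax time in direction $\theta$. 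For each fixed $\theta$ this argmax is a.s. unique, by the same argument that makes $T$ in \eqref{eq:def_of_T} a.s. unique. Since both $\bfW$ and $\bbD$ are rotation invariant, $(h(\theta),h'(\theta))$ has the law of $(M,Y_T)$ up to the sign of the second coordinate. Taking expectations and using Fubini gives \eqref{eq:exp_area}; the integrand is bounded by $2$, so Fubini is justified.

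\textbf{Step 2 ($\bbE[M^2]$).} This is routine from \eqref{eq:cdf_M}:
\[
\bbE[M^2]=\int_0^1 2a\Bigl(1-\frac{2\arcsin a}{\pi-\arccos a}\Bigr)\D a .
\]
Substituting $a=\sin t$ turns this into an elementary integral plus an integral of $\sin 2t/(\tfrac{\pi}{2}+t)$ type. That produces the $\int_\pi^{2\pi}\frac{\cos u}{u}\D u$ term after the shift $u=2t+\pi$.

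\textbf{Step 3 ($\bbE[Y_T^2]$, the main obstacle).} For $0<a<1$ let $\bbD_a=\bbD\cap\{x<a\}$ be the truncated disk, and let $\alpha=\arccos a$, so its corners are $\be^{\pm i\alpha}$. The event $\{M\ge a\}$ is the event that $\bfW$ exits $\bbD_a$ through the chord $\{x=a\}$. I would obtain the joint law of $(M,Y_T)$ by a first-passage/excursion decomposition. To leading order in $\eps$, the event $\{M\in[a,a+\eps),\,Y_T\in\D y\}$ has probability
\[
\omega_{\bbD_a}(\boldsymbol{0},\D y)\cdot \bbP_{(a,y)}\bigl(\text{exit }\bbD\text{ before reaching }\{x=a+\eps\}\bigr),
\]
where $\omega_{\bbD_a}(\boldsymbol{0},\cdot)$ is harmonic measure on the chord. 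As stated, this step is heuristic. The second factor must be interpreted via an $\eps$-order boundary derivative (a normal derivative of the corresponding harmonic function at the chord, equivalently an excursion-measure computation) to make the decomposition precise. Both factors are explicit after conformal mapping. The Möbius map $z\mapsto (z-\be^{i\alpha})/(z-\be^{-i\alpha})$ sends $\bbD_a$ onto a wedge of opening $\pi-\alpha$, and a power map $w\mapsto w^{\pi/(\pi-\alpha)}$ sends it to a half-plane. In the half-plane the Poisson kernel is Cauchy, which is presumably how the denominator $\pi-\arccos a$ in \eqref{eq:cdf_M} arises. With the density $f(a,y)$ in hand, I would integrate $y^2 f(a,y)$ first in $y$ along the chord $|y|<\sqrt{1-a^2}$, then in $a$ via $a=\cos\alpha$, and hope the result combines with Step 2 into $1-\log 2+\int_\pi^{2\pi}\frac{\cos u}{u}\D u$. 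Making the decomposition rigorous and the resulting double integral tractable is where I expect the real difficulty.

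As a fallback and consistency check for Step 3, I would exploit that $\operatorname{Re}(\bfW_t^2)=X_t^2-Y_t^2$ is a martingale. This gives $\bbE[X_{\tau}^2-Y_{\tau}^2]=0$, which constrains the post-$T$ contribution. Since $T$ is not a stopping time, this only serves as a check and not a proof.
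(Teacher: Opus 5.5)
Your overall route is the paper's: Blaschke's formula with rotation invariance in Step 1, then a strong-Markov decomposition at $\tau_{\cD_a}$ with the truncated disk mapped onto a wedge in Step 3. But the proposal has a genuine gap. Step 3, which carries all of the theorem's content, is never carried out, and Step 2 makes a false claim about where the answer's terms come from. Under $u=2t+\pi$ you have $\sin 2t=-\sin u$ and $\tfrac{\pi}{2}+t=\tfrac{u}{2}$, so
\[
\bbE[M^2]=\int_0^{\pi/2}\sin 2t\,\frac{\frac{\pi}{2}-t}{\frac{\pi}{2}+t}\D t=\pi\bigl(\Si(\pi)-\Si(2\pi)\bigr)-1\approx 0.3628 .
\]
This is a sine integral, not $\int_\pi^{2\pi}\frac{\cos u}{u}\D u$. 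So the cosine integral and the $\log 2$ must come from $\bbE[Y_T^2]$, and the sine integrals must cancel between the two pieces. Your plan to ``hope the result combines with Step 2'' therefore rests on the wrong bookkeeping.

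Step 3 is missing three things.
\begin{enumerate}
\item[(i)] A rigorous form of the decomposition. Since $T$ is not a stopping time, the paper first proves an exact identity for $Y_{\tau_{\cD_a}}$, not $Y_T$, on $\{M\in[a,a+\eps)\}$. It then lets $a\uparrow M$ along the grid $\{k/n\}$, using $\tau_{\cD_a}\uparrow T$. This needs uniform convergence of $\eps^{-1}\omega^{a+iy}_{\cD_{a+\eps}}(\cC_{a+\eps})$ on compacts, and an integrable domination near the chord endpoints $y=\pm b_a$, where this factor blows up like $(b_a^2-y^2)^{-1}$.
\item[(ii)] The explicit limit $\eta_a(y)=\frac{2b_a}{\beta_a(b_a^2-y^2)}$. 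Your normal-derivative idea does give this, as $-\partial_x\arg\mathfrak{l}_a(z)/\beta_a$ at $z=a+iy$.
\item[(iii)] The evaluation. In the wedge variable $r$ the $y$-integral reduces to $J(s)=\int_0^\infty r^{s-1}\bigl(r^{2\lambda_a}-2r^{\lambda_a}\cos\Phi_a+1\bigr)^{-1}\D r$ at $s=\lambda_a,\lambda_a\pm1$. This is followed by the substitution $a=-\cos(\pi/\lambda_a)$.
\end{enumerate}

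With these in hand your split route does close. One gets $\int y^2\eta_a(y)\,\omega^0_{\cD_a}(a+i\,\D y)=b_a\lambda_a(2-\lambda_a)/\pi$, hence $\bbE[Y_T^2]=\int_1^2\frac{2-\lambda}{\lambda}\sin^2(\pi/\lambda)\D\lambda$. Meanwhile $\bbE[M^2]=\int_1^2\cos^2(\pi/\lambda)\D\lambda$. Their difference is $\int_1^2\frac{\lambda-1+\cos(2\pi/\lambda)}{\lambda}\D\lambda$, which is the paper's integral. As written, though, your proposal establishes neither the joint law of $(M,Y_T)$ nor the value of $\bbE[M^2-Y_T^2]$.
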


\begin{remark}\label{rem:Ci_function}
Let $\Ci(z)$ denote the \emph{cosine integral}. This is the principal value of the integral
	\begin{equation*}
		\Ci(z) = -\int_z^{\infty} \frac{\cos u}{u}\D u, \qquad z \in \mathbb{C};
	\end{equation*}
	see \cite[Equation 6.2.11]{DLMF}. We can use this special function to express our main result from Theorem \ref{thm:area} in a more compact way. More precisely,
	\begin{equation*}
		\bbE[\sfA_{\tau_{\bbD}}] = \pi \big(1 - \log 2 + \Ci(2\pi) - \Ci(\pi)\big).
	\end{equation*}
\end{remark}

Once we move away from the perimeter and area of the convex hull of standard planar Brownian motion, no more explicit results are known. Bounds for the expected diameter were considered in \cite{MX} and improved upon in \cite{Jovalekic}. Circumradius and inradius of the Brownian convex hull were studied in \cite{CPS}, and bounds for the expected values of those quantities were obtained. Many of these bounds were extended to Brownian motion in higher dimensions by \cite{hi_dim_hulls}. Another set of geometric functionals that are natural to study in higher dimensions are the so-called intrinsic volumes. The exact expressions for the expected value of all the intrinsic volumes of the convex hull spanned by standard $d$-dimensional Brownian motion run for unit time can be found in \cite{Eldan, Kabluchko-Zapor-TAMS, Molchanov-Wespi}.

The rest of the paper is organized as follows. In Section~\ref{sec:Cauchy} we present Cauchy's formula for the perimeter, and Blaschke's formula for the area (also known as Cauchy's area formula) for convex compact sets. Together with rotational invariance (of both the process and the domain), these formulas enable us to express the expected perimeter and expected area in simpler terms. More precisely, we express the expected perimeter in terms of the expected value of the Brownian motion's maximum horizontal displacement at the exit time. On the other hand, the problem of computing the expected area is reduced to computing the expected value of the difference of two squares:~that of the Brownian motion's maximum horizontal displacement at the exit time and its vertical displacement at the time this maximum horizontal displacement is achieved. 

A key idea in computing both expected values of interest is to recast the maximum horizontal displacement at the exit time in terms of harmonic measure in a domain we call the \emph{truncated disk}. In Section~\ref{sec:truncated-disk} we find the wanted harmonic measure by conformally mapping the truncated disk onto a wedge. In Section~\ref{sec:perimeter} we compute the closed formula for the expected perimeter, and in Section~\ref{sec:area} we do the same for the expected area. Finally, in Section~\ref{sec:star_topological} we comment on two related hulls of the Brownian path run until exiting the disk, namely, the \emph{star hull} and \emph{topological hull}.


\section{Cauchy's formulas and the support function of the Brownian convex hull}\label{sec:Cauchy}

For a unit vector $\be\in\bbR^2$, the support function of a convex compact set $\cK\subset\bbR^2$ is
\begin{equation*}
	h_\cK(\be) = \sup_{\boldsymbol{x}\in \cK} \langle \boldsymbol{x},\be \rangle,
\end{equation*}
where $\langle \cdot, \cdot \rangle$ denotes the standard scalar product on $\bbR^2$. For $\theta \in [0, 2\pi)$, we write
\begin{equation*}
	\be_{\theta} = (\cos \theta, \sin \theta), \qquad h_\cK(\theta) = h_\cK(\be_{\theta}) = \sup_{\boldsymbol{x}\in \cK} \langle \boldsymbol{x},\be_{\theta} \rangle.
\end{equation*}
In the special case when $\cK = \cH_{\tau_{\bbD}}$, that is, the convex hull of the Brownian motion $\bfW$ run until the exit time from the unit disk, we drop the subscript and write
\begin{equation}\label{eq:def_of_h_theta}
	h(\theta) = \sup_{\boldsymbol{x} \in \cH_{\tau_{\bbD}}} \langle \boldsymbol{x},\be_{\theta} \rangle = \sup_{0 \leq t \leq \tau_{\bbD}} \langle \bfW_t, \be_{\theta} \rangle.
\end{equation}

\subsection{Perimeter}
Using Cauchy's surface area formula \cite{cauchy1832memoire, TsukermanVeomett}, we have the representation
\begin{equation}\label{eq:cauchy_for_LtauD}
	\sfP_{\tau_{\bbD}} = \int_0^{2\pi} h(\theta) \D{\theta}.
\end{equation}
By rotational invariance of standard planar Brownian motion and of the unit disk $\bbD$, the process $(\langle \bfW_t, \be_\theta \rangle)_{0\leq t\leq \tau_{\bbD}}$ has the same law as $(X_t)_{0\leq t\leq \tau_{\bbD}}$. Hence, for all $\theta \in [0, 2\pi)$, it holds that
\begin{equation*}
	h(\theta) \stackrel{d}{=} \sup_{0\leq t\leq \tau_{\bbD}} X_t = M.
\end{equation*}
In particular,
\begin{equation*}
	\bbE[h(\theta)] = \bbE[M] \quad\text{for all } \theta \in [0, 2\pi).
\end{equation*}
Taking expectation in \eqref{eq:cauchy_for_LtauD} and applying Tonelli's theorem, we obtain
\begin{equation}
    \label{eq:exp_per_via_exp_M}
	\bbE[\sfP_{\tau_{\bbD}}] = \int_0^{2\pi} \bbE[h(\theta)]\D{\theta} = \int_0^{2\pi} \bbE[M]\D{\theta} = 2\pi\,\bbE[M],
\end{equation}
which is exactly the first equality in equation \eqref{eq:exp_per} in Theorem \ref{thm:main}. Thus, the problem of computing the expected perimeter of the Brownian convex hull in the disk is reduced to computing the expected value of the maximum horizontal displacement $M$. We compute much more than just the expected value of $M$. Namely, we develop an exact expression for the cumulative distribution function of $M$. In order to do this, as already mentioned, we first recast the law of $M$ in terms of harmonic measure in a particularly shaped domain that we call the truncated disk.

\subsection{Area}
Our starting point in computing the expected area will be Blaschke's area formula (also known as Cauchy's area formula); see \cite[Equation (2.19)]{Hsiung}. Using this formula, we have
\begin{equation*}
	\sfA_{\tau_{\bbD}} = \frac{1}{2} \int_0^{2\pi}\mleft(h(\theta)^2 - h'(\theta)^2\mright)\D{\theta},
\end{equation*}
where $h(\theta)$ is the support function defined in \eqref{eq:def_of_h_theta}. It is straightforward to see that the support function $h(\theta)$ is Lipschitz with deterministic constant $1$, so the derivative $h'(\theta)$ exists for almost every $\theta \in [0, 2\pi)$.
By using Fubini's theorem to compute the product measure of the set
\[
    \bigl\{ 
        (\theta, \omega) \in [0,2\pi) \times \Omega
        :
        h'(\theta) \text{ exists for the path } \boldsymbol{W}(\omega) 
    \big\}
\]
in two different ways, it follows from rotational invariance that for every $\theta \in [0,2\pi)$, the derivative $h'(\theta)$ exists almost surely.

Using again the rotational invariance of Brownian motion and of the unit disk $\bbD$, we have
\begin{equation}\label{eq:area_via_h0}
	\bbE[\sfA_{\tau_{\bbD}}] = \pi \,\bbE\mleft[h(0)^2 - h'(0)^2\mright].
\end{equation}
We now give the interpretation of $h'(\theta)$ exactly as it is done in \cite[Section 5.2]{Majumdar}. Let us write, for $\theta \in [0, 2\pi)$,
\begin{equation*}
	b_t(\theta) = \langle \bfW_t, \be_{\theta} \rangle = X_t \cos\theta + Y_t \sin\theta.
\end{equation*}
After taking derivatives with respect to $\theta$ we get
\begin{equation}\label{eq:bt-derivative}
    b_t'(\theta) = -X_t \sin\theta + Y_t \cos\theta = b_t(\theta+\pi/2).
\end{equation}
Hence, $b_t(\theta)$ and $b_t'(\theta)$ are two independent standard one-dimensional Brownian motions. Notice that we can write
\begin{equation*}
	h(\theta) = \sup_{0 \le t \le \tau_{\bbD}} b_t(\theta).
\end{equation*}
Moreover, we have
\[
    h(0)
    =
    b_T(0)
    =
    X_T
    =
    M,
\]
where $T$ (see \eqref{eq:def_of_T}) denotes the almost surely unique random time at which the maximum horizontal displacement is attained. Furthermore, we claim that, almost surely,
\begin{equation}
    \label{eq:h'(0)}
    h'(0)
    =b'_T(0)=
    Y_T.
\end{equation}
Let $T_n$ be the time at which the supremum defining $h(1/n)$ is attained.
The Brownian paths are almost surely continuous and have unique times that maximize $b_t(1/n)$ on the time interval $[0,\tau_{\bbD}]$,
so it follows that $T_n\to T$.
Equality \eqref{eq:h'(0)} now follows easily from the obvious inequalities
\[
    b_T(1/n)-b_T(0)
    \leq
    h(1/n)-h(0)
    \leq
    b_{T_n}(1/n)-b_{T_n}(0),
\]
dividing by $1/n \to 0$ (as $n \to \infty$) and using \eqref{eq:bt-derivative}.

Thus, the area formula \eqref{eq:area_via_h0} reduces to computing the expected value $\bbE[M^2 - Y_T^2]$, which gives \eqref{eq:exp_area} from Theorem~\ref{thm:area}. To determine this expected value, we find the joint distribution of random vector $(M, Y_T)$. In what follows, we identify $\bbR^2$ with $\bbC$ in the obvious way when it is convenient for us. 

\section{Harmonic measure of the truncated disk}
\label{sec:truncated-disk}

For $a \in (0, 1)$, define the truncated disk domain $\cD_a$ by
\begin{equation}\label{eq:truncated_disk}
\cD_a = \bbD \cap \{z \in \bbC :\Re z < a\}.
\end{equation}
Note that the boundary of $\cD_a$ can be naturally partitioned into two disjoint pieces: the closed vertical chord $\cV_a$ defined by
\begin{equation}\label{eq:chord}
\cV_a=\{z \in \bbC :|z|\leq 1,\, \Re z = a\},
\end{equation}
and the circular arc $\cC_a$ defined by
\begin{equation}\label{eq:arc}
\cC_a=\{z\in \bbC: |z|=1,\, \Re z<a\}.
\end{equation}
See Figure \ref{fig:truncated-disk} for a depiction of $\cD_a$ with $a=\frac{1}{2}$.

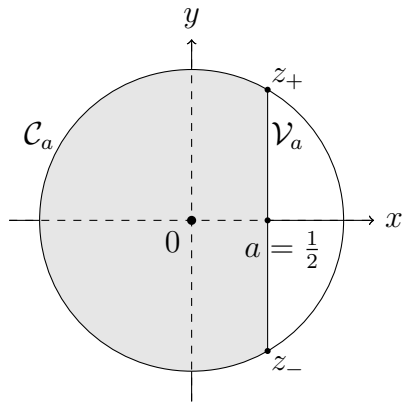
\begin{figure}[h]
\centering
\begin{tikzpicture}[scale=2]
  \draw[->] (-1.2,0) -- (1.2,0) node[right] {$x$};
  \draw[->] (0,-1.2) -- (0,1.2) node[above] {$y$};

  \def\a{0.5}
  \def\shift{0.1}
  \begin{scope}
    \clip (0,0) circle (1);
    \fill[gray!20] (-1.2,-1.2) rectangle (\a,1.2);
  \end{scope}

  \draw (0,0) circle (1);

  \draw (\a,{-(sqrt(1-\a*\a))}) -- (\a,{sqrt(1-\a*\a)});

  \fill (\a,0) circle (0.02);
  \node[below] at ({\a+\shift},0) {$a=\tfrac{1}{2}$};

  \fill (0,0) circle (0.03);
  \node[below left] at (0,0) {$0$};

  \draw[dashed,->] (-1.2,0) -- (1.2,0);
  \draw[dashed,->] (0,-1.2) -- (0,1.2);

  \node[above right] at (0.45,0.4) {$\cV_a$};
  \node[above left] at (-0.83,0.4) {$\cC_a$};
  
  \fill (\a,{-(sqrt(1-\a*\a))}) circle (0.02);
  \fill (\a,{(sqrt(1-\a*\a))}) circle (0.02);
  \node[below right] at (0.45,-0.83) {$z_a^-$};
  \node[above right] at (0.45,0.8) {$z_a^+$};
  
\end{tikzpicture}
\caption{The truncated disk domain $\cD_a$ with $a=\frac{1}{2}$.}
\label{fig:truncated-disk}
\end{figure}

Define the exit time of $\cD_a$ analogously to that of $\bbD$ and denote it by $\tau_{\cD_a}$, that is, 
\begin{equation*}
\tau_{\cD_a} = \inf\{t\geq 0 : \bfW_t \notin \cD_a\}.
\end{equation*}
Since $\cD_a\subset\bbD$, it is clear that $\tau_{\cD_a}\leq \tau_\bbD$. With this in mind and recalling the definition of $M$ from \eqref{eq:horizontal_max}, note that path continuity of $\bfW$ implies that
\begin{equation}
	\{M \geq a\}= \{\bfW_{\tau_{\cD_a}}\in\cV_a\}.\label{eq:chord_exit}
\end{equation}
In other words, $M\geq a$ if and only if $\bfW$ exits $\cD_a$ through the vertical chord $\cV_a$. 

For an arbitrary domain $\cD \subset \bbC$, let $\omega^z_\cD(\cdot)$ denote the harmonic measure on $\partial \cD$ seen from $z\in \cD$. More precisely, for any $z\in \cD$ and measurable $\cE\subset\partial \cD$, we have
\begin{equation*}
\omega^z_\cD(\cE) = \bbP(\bfW_{\tau_\cD} \in \cE \mid \bfW_0 = z).
\end{equation*}
We can use \eqref{eq:chord_exit} along with the above notation to write
\begin{equation}\label{eq:dist_of_M_as_harm_measure}
\bbP(M \geq a) = \omega^0_{\cD_a}(\cV_a).
\end{equation}
Motivated by this,
we first compute the total mass on $\cV_a$ with respect to $\omega_{\cD_a}^0$.
Later we shall need the finer exit-location density along this chord.

\subsection{A conformal map onto a wedge}

To determine the harmonic measure $\omega_{\cD_a}^z$ (and specifically $\omega_{\cD_a}^z(\cV_a)$),
we will use a conformal mapping $\mathfrak{l}_a$ from $\mathcal{D}_a$ to the wedge $\mathcal{W}_a$ -- see Lemma~\ref{l:Da-to-Wa} (most often, but not always, we will have $z=0$).
By conformal invariance of Brownian motion,
this will allow us to equate the desired harmonic measure of $\mathcal{V}_a \subset \partial \mathcal{D}_a$ seen from $z$ to that of $\mathfrak{l}_a(\mathcal{V}_a) \subset \partial \mathcal{W}_a$ seen from $\mathfrak{l}_a(z)$. 
Provided that $\mathfrak{l}_a$ extends continuously to the boundary of $\mathcal{D}_a$, 
we obtain
\begin{equation}
    \label{eq:conformal_invariance}
    \omega^z_{\mathcal{D}_a}(\mathcal{V}_a)=\omega^{\mathfrak{l}_a(z)}_{\mathcal{W}_a}\big(\mathfrak{l}_a(\mathcal{V}_a)\big);
\end{equation}
see \cite[Proposition 2.19]{Lawler} and \cite[Theorem 7.23]{MortersPeres}. The choice of $\mathcal{W}_a$ as the target domain of $\mathfrak{l}_a$ is justified by the simplicity of the appropriate $\mathfrak{l}_a$ and the fact that the harmonic measure on $\partial\mathcal{W}_a$ is already present in the literature, see e.g.\ \cite{Metzler}.

We now construct the desired conformal map $\mathfrak{l}_a$. Fix $a \in (0,1)$ and set
\begin{equation*}
    b_a = \sqrt{1-a^2}, \qquad z_a^\pm = a \pm i b_a.
\end{equation*}
These are the intersection points of the vertical line $\Re z = a$ with the unit circle. The vertical chord $\mathcal{V}_a$ on the boundary of $\mathcal{D}_a$ is the line segment with endpoints $z_a^-$ and $z_a^+$ (see Figure \ref{fig:truncated-disk}). Denote the Riemann sphere by $\hat{\bbC}$, and consider a M\"{o}bius transformation
\begin{equation}\label{eq:g_def}
\mathfrak{l}_a(z) = \frac{z - z_a^-}{z_a^+ - z}, \qquad z\in\hat{\bbC}.
\end{equation}
Recall that M\"{o}bius transformations are precisely the conformal automorphisms of the Riemann sphere.

We introduce two angle sizes that will be relevant in the rest of this section
\[
    \beta_a=\pi-\arccos a,
    \qquad
    \theta_a=2\arcsin a.
\]
\begin{lemma}
    \label{l:Da-to-Wa}
    Then $0<\theta_a<\beta_a<\pi$, and the conformal map  $\mathfrak{l}_a$ sends $\cD_a$ onto the wedge
    \[
        \mathcal{W}_a=\{r e^{i\theta}:r>0,\ 0<\theta<\beta_a\}.
    \]
    Moreover, $\mathcal{V}_a$ and $\overline{\mathcal{C}_a}$ are sent to the closed rays $[0,\infty]$ and $e^{i \beta_a}[0,\infty]$ inside $\hat{\bbC}$, respectively.
\end{lemma}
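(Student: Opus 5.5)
The plan is to use the elementary geometry of M\"obius maps: $\mathfrak{l}_a$ sends generalized circles to generalized circles and preserves angles. The domain $\cD_a$ is bounded by two circular arcs meeting at $z_a^\pm$, and $\mathfrak{l}_a$ sends $z_a^-\mapsto 0$ and $z_a^+\mapsto\infty$. Hence both boundary pieces become rays from $0$ to $\infty$, and $\cD_a$ becomes a wedge with vertex $0$.

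First, the angle inequalities. For $a\in(0,1)$ we have $\arccos a\in(0,\pi/2)$, so $\beta_a\in(\pi/2,\pi)$. Also $\arcsin a\in(0,\pi/2)$, so $\theta_a\in(0,\pi)$. For $\theta_a<\beta_a$, write $\arccos a=\pi/2-\arcsin a$. Then $\beta_a=\pi/2+\arcsin a$, and the inequality becomes $\arcsin a<\pi/2$, which holds.

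Next, the image of the chord. Parametrize $\cV_a$ by $z=a+is$ with $s\in[-b_a,b_a]$. Then
\[
\mathfrak{l}_a(z)=\frac{i(s+b_a)}{i(b_a-s)}=\frac{b_a+s}{b_a-s}\in[0,\infty],
\]
which is monotone in $s$. So $\cV_a$ maps bijectively onto the closed ray $[0,\infty]$.

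For the arc, $\overline{\cC_a}$ is a circular arc from $z_a^-$ to $z_a^+$. Its image is therefore an arc of a generalized circle through $0$ and $\infty$, i.e.\ a closed ray $e^{i\phi}[0,\infty]$. To find $\phi$, evaluate at $z=-1\in\cC_a$:
\[
\mathfrak{l}_a(-1)=\frac{-1-z_a^-}{1+z_a^+}=-\frac{1+a-ib_a}{1+a+ib_a}.
\]
This has modulus $1$ and argument $\pi-2\arg(1+a+ib_a)$. Since $\tan(\arg(1+a+ib_a))=b_a/(1+a)=\tan(\tfrac12\arccos a)$ by the half-angle formula, we get $\arg(1+a+ib_a)=\tfrac12\arccos a$. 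Thus $\mathfrak{l}_a(-1)=e^{i\beta_a}$, and the arc maps onto $e^{i\beta_a}[0,\infty]$. The only mildly delicate points are this angle bookkeeping and checking that the whole arc, not its complement, lands on that single ray. The latter holds because the image of the full circle is the full line through $0$ in direction $e^{i\beta_a}$, and the arc's image is the connected piece from $0$ to $\infty$ containing $e^{i\beta_a}$.

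Finally, $\partial\cD_a=\cV_a\cup\overline{\cC_a}$ maps onto the union of the two rays, which bounds two complementary wedges of openings $\beta_a$ and $2\pi-\beta_a$. Since $\mathfrak{l}_a$ is a homeomorphism of $\hat\bbC$, the domain $\cD_a$ maps onto one of these two components. To decide which, test an interior point: $0\in\cD_a$ and
\[
\mathfrak{l}_a(0)=\frac{-z_a^-}{z_a^+}=-\frac{a-ib_a}{a+ib_a}.
\]
This has modulus $1$ and argument $\pi-2\arccos a$, since $\arg(a+ib_a)=\arccos a$. So its argument is $\pi-2\arccos a=2\arcsin a=\theta_a$, which lies in $(0,\beta_a)$ by the first step. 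Hence $\mathfrak{l}_a(\cD_a)=\mathcal W_a$, and moreover $\mathfrak{l}_a(0)=e^{i\theta_a}$, which explains why $\theta_a$ is introduced. Conformality on $\cD_a$ is automatic, since the pole $z_a^+$ lies on $\partial\cD_a$ rather than in the domain.
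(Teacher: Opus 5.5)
Your proof is correct and follows essentially the same route as the paper. Both arguments use that generalized circles through $z_a^\pm$ become lines through $0$, that $\mathfrak{l}_a(-1)=e^{i\beta_a}$ fixes the direction of the arc's image, and that $\mathfrak{l}_a(0)=e^{i\theta_a}$ picks out the correct region. Your differences are cosmetic: you parametrize the chord explicitly, you check $0<\theta_a<\beta_a<\pi$ directly, and you choose between the two wedges bounded by the image rays rather than the paper's four regions cut out by the two image lines.
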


\begin{proof}
    M\"{o}bius transformations on $\hat{\bbC}$ map generalised circles (i.e.\ ordinary circles and lines) in $\hat{\bbC}$ to generalised circles. Consider the unit circle and the vertical line $x=a$. Since both of these sets pass through the points $z_a^{\pm}$, and the images of these points are $\mathfrak{l}_a(z_a^-)=0$ and $\mathfrak{l}_a(z_a^+)=\infty$, it follows that both of these sets are sent to lines through the origin. In the case of the vertical line, the fact that $\mathfrak{l}_a(a)=1 \in \mathbb{R}$ implies it is sent to the real axis. On the other hand, for the unit circle, put $\varphi=\arccos a$, so that $z_a^\pm=e^{\pm i\varphi}$.
    Then
    \[
        \mathfrak{l}_a(-1)
        =
        \frac{-1-e^{-i\varphi}}{e^{i\varphi}+1}
        =
        e^{i(\pi-\varphi)},
    \]
    and consequently the unit circle is sent to the line through the origin of slope $\beta_a = \pi - \varphi$.

    These two lines split $\hat{\bbC}$ into four connected regions, one of them being $\mathcal{W}_a$.
    Hence, all the wanted statements follow from the fact that $\mathfrak{l}_a(0) \in \mathcal{W}_a$,
    and this is the case since
    \begin{equation}
        \label{eq:la-0}
        \mathfrak{l}_a(0)=-\frac{z_a^-}{z_a^+}
             =-e^{-2i\varphi}
             =e^{i(\pi-2\varphi)}
             =e^{i\theta_a},
    \end{equation}
    and $0<\pi-2\varphi<\pi-\varphi=\beta_a$.
\end{proof}

A schematic depiction of the wedge $\mathcal{W}_a=\mathfrak{l}_a(\mathcal{D}_a)$ is shown in Figure \ref{fig:wedge} for $a=\frac{1}{2}$. We also record the image of the starting point $0$.

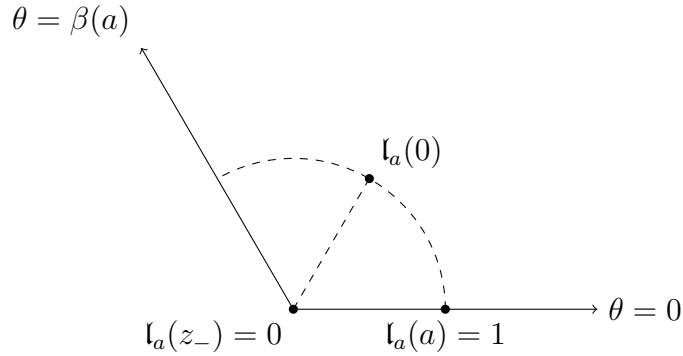
\begin{figure}[h]
\centering
\begin{tikzpicture}[scale=2]
  \draw[->] (0,0) -- (2,0) node[right] {$\theta = 0$};
  \draw[->] (0,0) -- ({-1},{sqrt(3)}) node[above left] {$\theta = \beta_a$};

  \fill (0,0) circle (0.03);
  \node[below left] at (0,0) {$\mathfrak{l}_a(z_a^-)=0$};

  \fill (1,0) circle (0.03);
  \node[below] at (1,0) {$\mathfrak{l}_a(a)=1$};

  \def\ang{60}
  \draw[dashed] (0,0) -- ({cos(\ang)},{sin(\ang)});
  \fill ({cos(\ang)},{sin(\ang)}) circle (0.03);
  \node[above right] at ({cos(\ang)},{sin(\ang)}) {$\mathfrak{l}_a(0)$};

  \draw[dashed] (1,0) arc (0:120:1);

\end{tikzpicture}
\caption{The wedge $\mathcal{W}_a = \mathfrak{l}_a(\mathcal{D}_a)$ with $a=\frac{1}{2}$ and $\beta_a=\frac{2\pi}{3}$.}
\label{fig:wedge}
\end{figure}

\subsection{\texorpdfstring{The probability of exiting $\mathcal{D}_a$ through $\mathcal{V}_a$}{The probability of exiting Da through Va}}

The total harmonic measure of either arm of a wedge follows from a simple
Dirichlet problem.

\begin{proposition}
    \label{p:dirichlet}
    Let $u$ be a point in $\mathcal{W}_a$.
    Then
    \[
        \omega_{\mathcal{W}_a}^{u}([0,\infty])
        = 
        1 - \frac{\arg u}{\beta_a},
    \]
    where the values of the argument function $\arg$ lie in $(-\pi,\pi]$.
    Consequently,
    for $z \in \cD_a$,
    \begin{equation}
        \label{eq:exit-Va}
        \omega_{\mathcal{D}_a}^z(\mathcal{V}_a)
        = 
        1 - \frac{\arg \mathfrak{l}_a(z)}{\beta_a}.
    \end{equation}
\end{proposition}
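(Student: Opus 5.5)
The plan is to solve the Dirichlet problem on the wedge with an explicit harmonic function, identify its solution with harmonic measure, and then transfer to $\cD_a$ by conformal invariance.

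\textbf{Candidate harmonic function.} Consider
\[
    v(u) = 1 - \frac{\arg u}{\beta_a}, \qquad u \in \mathcal{W}_a .
\]
Since $\beta_a<\pi$ by Lemma~\ref{l:Da-to-Wa}, the wedge lies in $\bbC\setminus(-\infty,0]$. On that region the principal branch $\arg u = \Im \log u$ is the imaginary part of a holomorphic function, so $v$ is harmonic on $\mathcal{W}_a$. Moreover $v$ is bounded with values in $[0,1]$. It extends continuously to $\partial\mathcal{W}_a\setminus\{0\}$, equal to $1$ on the open ray $(0,\infty)$ and to $0$ on the ray $e^{i\beta_a}(0,\infty)$.

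\textbf{Identification with harmonic measure.} Let $\tau$ be the exit time of $\mathcal{W}_a$ for Brownian motion started at $u$. This time is almost surely finite, since already the exit time of a half-plane containing the wedge is finite. Take a sequence of bounded subdomains exhausting $\mathcal{W}_a$, for instance the wedge intersected with $\{\eps<|u|<R\}$. On each of them optional stopping applies to the bounded martingale $v(\bfW_{t\wedge\tau})$.

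Letting $t\to\infty$ and using bounded convergence together with continuity of $v$ up to the boundary away from the vertex, we get
\[
    v(u)=\bbE_u\bigl[v(\bfW_\tau)\bigr]=\bbP_u\bigl(\bfW_\tau\in(0,\infty)\bigr).
\]
The vertex $0$ and the point $\infty$ carry no harmonic measure, because planar Brownian motion almost surely does not hit a fixed point. Hence the result holds equally for the closed ray $[0,\infty]$.

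This is the only step with any subtlety. One must check that the boundary discontinuity at the vertex and the unboundedness of the domain cause no trouble. Both are handled by the polarity of points and the boundedness of $v$. Alternatively, one can simply cite the standard uniqueness of bounded solutions to the Dirichlet problem with almost-everywhere continuous boundary data, or the wedge harmonic measure from \cite{Metzler}.

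\textbf{Transfer to $\cD_a$.} For $z\in\cD_a$, Lemma~\ref{l:Da-to-Wa} gives that $\mathfrak{l}_a$ maps $\cD_a$ conformally onto $\mathcal{W}_a$. Being a Möbius map, it extends homeomorphically to the closures in $\hat{\bbC}$, and it sends $\cV_a$ to $[0,\infty]$. Conformal invariance \eqref{eq:conformal_invariance} then yields
\[
    \omega^z_{\cD_a}(\cV_a)=\omega^{\mathfrak{l}_a(z)}_{\mathcal{W}_a}([0,\infty])=1-\frac{\arg\mathfrak{l}_a(z)}{\beta_a},
\]
which is \eqref{eq:exit-Va}.
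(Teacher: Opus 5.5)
Your proposal is correct and follows the paper's proof: the same explicit harmonic function $1-\frac{1}{\beta_a}\Im\log u$ on the wedge, identified with the harmonic measure of the lower arm, then transferred to $\cD_a$ via Lemma~\ref{l:Da-to-Wa} and conformal invariance \eqref{eq:conformal_invariance}. The only difference is in the identification step: the paper simply invokes uniqueness of the bounded harmonic function with the given boundary values, whereas you verify it directly by optional stopping on truncated wedges, using that the vertex is polar. This is a harmless and slightly more self-contained variant.
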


\begin{proof}
    It is well known that the function $v \mapsto \omega_{\mathcal{W}_a}^{v}([0,\infty])$ is the unique bounded harmonic function $h$ on $\mathcal{W}_a$ with boundary values $1$ on the lower arm and $0$ on the upper arm.
    It is easy to see that
    \[
        h(v)
        =
        1 - \frac{\arg v}{\beta_a}
        =
        1 - \frac{1}{\beta_a} \Im\log v,
    \]
    satisfies the properties above by using the fact that the imaginary (and real) part of a holomorphic function is harmonic. Now Equation \eqref{eq:exit-Va} follows from Lemma~\ref{l:Da-to-Wa} and conformal invariance \eqref{eq:conformal_invariance}.
\end{proof}

We note that the total harmonic mass of $\mathcal{V}_a$, provided by the previous proposition, is all that is needed to determine the law of $M$, and therefore the expected perimeter through \eqref{eq:exp_per_via_exp_M}.
For the area calculation, however, we need to know where on $\mathcal{V}_a$ the process exits.

\subsection{\texorpdfstring{The exit-location density on $\mathcal{V}_a$}{The exit-location density on Va}}

Let
\[
    \lambda_a=\frac{\pi}{\beta_a},
    \qquad
    \Phi_a=\lambda_a\theta_a
           =\frac{2\pi\arcsin a}{\pi-\arccos a}.
\]
Notice that $1<\lambda_a<2$ and $0<\Phi_a<\pi$.

\begin{proposition}
    \label{p:Va-density}
    For $y\in (-b_a,b_a)$,
    density of $\omega_{\cD_a}^0$ at $a+i\,\D y$ is
    \begin{equation}
        \label{eq:Va-density}
        \omega_{\cD_a}^0(a+i\,\D y)
        =
        \frac{2b_a}{\beta_a(b_a-y)^2}
        \frac{\rho_a(y)^{\lambda_a-1}\sin\Phi_a}{\sin^2\Phi_a + \bigl(\rho_a(y)^{\lambda_a}-\cos\Phi_a\bigr)^2} \, \D y,
    \end{equation}
    where $\rho_a(y):= \mathfrak{l}_a(a+iy) = (b_a+y)/(b_a-y)$.
\end{proposition}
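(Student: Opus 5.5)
We will transport the problem to the wedge $\mathcal{W}_a$ via the Möbius map $\mathfrak{l}_a$, and then to the upper half-plane by a power map. There the Poisson kernel is explicit, and we pull the resulting density back to $\cV_a$.

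\emph{Step 1: the image of the starting point and of the chord.} By conformal invariance \eqref{eq:conformal_invariance}, applied to subsegments of $\cV_a$ rather than to all of $\cV_a$, the exit distribution of $\bfW$ from $\cD_a$ started at $0$ is the pushforward under $\mathfrak{l}_a^{-1}$ of the harmonic measure of $\mathcal{W}_a$ seen from $\mathfrak{l}_a(0)=e^{i\theta_a}$ (see \eqref{eq:la-0}). By Lemma~\ref{l:Da-to-Wa}, the chord point $a+iy$ is sent to a point of $[0,\infty)$. A direct computation, using $z_a^\pm=a\pm ib_a$, gives
\[
\mathfrak{l}_a(a+iy)=\frac{i(y+b_a)}{i(b_a-y)}=\rho_a(y).
\]
Since $\rho_a'(y)=2b_a/(b_a-y)^2$, a density $p(\rho)\,\D\rho$ on the positive real axis pulls back to $p(\rho_a(y))\,\frac{2b_a}{(b_a-y)^2}\,\D y$ on $\cV_a$. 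This explains the factor $2b_a/(b_a-y)^2$ in \eqref{eq:Va-density}.

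\emph{Step 2: from the wedge to the half-plane.} The map $w\mapsto w^{\lambda_a}$, with $\lambda_a=\pi/\beta_a$ and the principal branch, sends $\mathcal{W}_a$ conformally onto the upper half-plane $\mathbb{H}$. It maps the ray $[0,\infty)$ to itself and the ray $e^{i\beta_a}[0,\infty)$ to $(-\infty,0]$, and it sends $e^{i\theta_a}$ to $e^{i\Phi_a}$. Harmonic measure in $\mathbb{H}$ seen from $\xi+i\eta$ is the Poisson kernel
\[
\frac{1}{\pi}\,\frac{\eta}{(s-\xi)^2+\eta^2}\,\D s .
\]
With $\xi+i\eta=e^{i\Phi_a}$ and $s=\rho^{\lambda_a}$, so that $\D s=\lambda_a\rho^{\lambda_a-1}\D\rho$, the harmonic measure on $[0,\infty)$ seen from $e^{i\theta_a}$ has density
\[
\frac{\lambda_a}{\pi}\,\frac{\rho^{\lambda_a-1}\sin\Phi_a}{\sin^2\Phi_a+(\rho^{\lambda_a}-\cos\Phi_a)^2},
\]
and $\lambda_a/\pi=1/\beta_a$. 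Combining this with Step 1 gives exactly \eqref{eq:Va-density}.

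\emph{Main obstacle: justifying conformal invariance for arbitrary subsets of the boundary.} The only delicate point is applying \eqref{eq:conformal_invariance} to arbitrary Borel subsets of $\cV_a$, not just to the whole chord. The composite map $w^{\lambda_a}\circ\mathfrak{l}_a$ extends to a homeomorphism of the closures in $\hat{\bbC}$, with the corners and $\infty$ handled in the Riemann sphere. The same references (\cite[Proposition 2.19]{Lawler}, \cite[Theorem 7.23]{MortersPeres}) then give equality of the exit laws as measures. The remaining task is to check that the endpoints $z_a^\pm$ carry no mass, which is immediate because the density obtained is integrable.

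As a sanity check, integrating the density in $\rho$ over $(0,\infty)$ gives
\[
\frac{1}{\pi}\Bigl(\frac{\pi}{2}+\arctan\cot\Phi_a\Bigr)=1-\frac{\Phi_a}{\pi}=1-\frac{\theta_a}{\beta_a},
\]
which agrees with Proposition~\ref{p:dirichlet}.
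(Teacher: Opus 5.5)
Your proof is correct and follows the paper's route. You map $\cD_a$ to the wedge $\mathcal{W}_a$ by $\mathfrak{l}_a$, take the exit density on the lower arm seen from $e^{i\theta_a}$, and pull it back by $r=\rho_a(y)$; the only difference is that the paper quotes the wedge density from Metzler's Corollary~2.2, while you derive it via $w\mapsto w^{\lambda_a}$ and the half-plane Poisson kernel, which makes your argument self-contained. One small correction: $z_a^\pm$ carry no mass because the Poisson kernel is a probability density on $\mathbb{R}$ (or because points are polar for planar Brownian motion), not merely because your density is integrable, which alone would not rule out atoms; this is irrelevant for the statement anyway, since it only concerns the open chord.
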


\begin{proof}
    Under $\mathfrak{l}_a$, 
    the point $0$ is sent to $e^{i\theta_a}$ (see \eqref{eq:la-0}) and the vertical chord is sent to the lower arm of the wedge $\mathcal W_a$.
    The result of Metzler \cite[Corollary~2.2]{Metzler}, 
    applied with $\alpha=\beta_a$, $r_0=1$, and $\theta_0=\theta_a$, gives the unconditioned exit density on that arm
    \[
        \omega_{\mathcal{W}_a}^{e^{i\theta_a}}(\D r)
        =
        \frac{1}{\beta_a}
        \frac{r^{\lambda_a-1}\sin\Phi_a}{\sin^2\Phi_a+(r^{\lambda_a}-\cos\Phi_a)^2}\,\D r.
    \]
    To obtain \eqref{eq:Va-density} one simply uses conformal invariance of harmonic measure and the change of variables $r=\rho_a(y)$.
\end{proof}


\section{\texorpdfstring{The distribution of $M$ and the expected perimeter}{The distribution of M and the expected perimeter}}\label{sec:perimeter}

We are now ready to prove Theorem \ref{thm:main}.

\begin{proof}[Proof of Theorem \ref{thm:main}]
Proposition~\ref{p:dirichlet} and \eqref{eq:dist_of_M_as_harm_measure} give
\[
    \bbP(M \ge a) 
    = 
    1 - \frac{1}{\pi - \arccos a}\arg \frac{-z_a^-}{z_a^+}
    =
    1 - \frac{2\arcsin a}{\pi - \arccos a}.
\]
Therefore,
\begin{equation*}
\bbP(M < a) = \frac{2\arcsin a}{\pi - \arccos a}
	\end{equation*}
and this proves formula \eqref{eq:cdf_M} for the cumulative distribution function of $M$.
We can now use this formula to derive the integral representation \eqref{eq:exp_M} for $\bbE[M]$. Since $M\in[0,1]$ almost surely, we have
\begin{align}
\bbE[M] &= \int_0^1 \mathbb{P}(M \ge a)\D{a}\nonumber\\
& = \int_0^1\left(1 - \frac{2\arcsin a}{\pi - \arccos a}\right)\D{a}.\label{eq:exp_M-first}
\end{align}
Set $a=\sin t$ with $t\in[0,\pi/2]$. Then we get
\begin{equation*}
	\arcsin a = t,\quad \arccos a = \frac{\pi}{2} - t,\quad \D a = \cos t\, \D t.
\end{equation*}
Substituting these into \eqref{eq:exp_M-first} yields
\begin{equation*}
	\bbE[M] = \int_0^{\pi/2}\left(1 - \frac{2t}{\tfrac{\pi}{2}+t}\right)\cos t\D{t},
\end{equation*}
which is precisely \eqref{eq:exp_M}. The integral \eqref{eq:exp_M} does not appear to simplify to a familiar constant, however, see Remark \ref{rem:Si_function} for an alternative expression. Numerical integration yields
\begin{equation*}
	\bbE[M]\approx 0.511655.
\end{equation*}
Combining \eqref{eq:exp_per_via_exp_M} and \eqref{eq:exp_M} we clearly get \eqref{eq:exp_per}, and the numerical value
\begin{equation*}
	\bbE[\sfP_{\tau_{\bbD}}] \approx 3.214826.
\end{equation*}
This completes the proof of Theorem \ref{thm:main}.
\end{proof}

\begin{remark}
	It is interesting to notice that, as a trivial corollary of Theorem \ref{thm:main}, we have that for $a = 1/2$, $\bbP(M \ge a) = 1/2$. Namely, when the disk is truncated precisely at $a = 1/2$, we get that the probability of exiting through the vertical chord $\cV_a$ is the same as the probability of exiting through the circular arc $\cC_a$.
\end{remark}


\section{\texorpdfstring{The distribution of $(M,Y_T)$ and the expected area}{The distribution of (M,YT) and the expected perimeter}}\label{sec:area}

Before proving Theorem~\ref{thm:area}, we find a joint distribution of $(M, Y_T)$.

\begin{proposition}
    \label{p:mainmarkov}
    Let
    \begin{equation}
        \label{eq:eta-conv}
        q_\eps(a,y)
        = 
        \frac{\omega^{a+iy}_{\cD_{a+\eps}}(\cC_{a+\eps})}{\eps}
        \qquad\text{and} \qquad
        \eta_a(y)
        =
        \lim_{\eps \downarrow 0}
            q_\eps(a,y).
    \end{equation}
    Then for every bounded continuous function $f$,
    \[
        \bbE[f(M,Y_T)]
        =
        \int_0^1
            \int_{-b_a}^{b_a}
                f(a,y)\eta_a(y) \,
                \omega^0_{\mathcal{D}_a}(a+i\,\D y) \, \D a.
    \]
\end{proposition}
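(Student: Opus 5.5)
We will prove the formula by discretizing the level $a$ and applying the strong Markov property at the exit time of each truncated disk. Fix a bounded continuous $f$ and a mesh $0=a_0<a_1<\dots<a_n=1$ with $a_k=k/n$, and set $\eps=1/n$.

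\textbf{Decomposition.} The event $\{a_k\le M<a_{k+1}\}$ is, by \eqref{eq:chord_exit}, the event that $\bfW$ exits $\cD_{a_k}$ through $\cV_{a_k}$ at some point $a_k+iy$, and afterwards exits $\cD_{a_{k+1}}$ through $\cC_{a_{k+1}}$. Note that $\cD_{a_k}\subset\cD_{a_{k+1}}$ and the post-$\tau_{\cD_{a_k}}$ path stays in $\cD_{a_{k+1}}$ until its exit time. By the strong Markov property at $\tau_{\cD_{a_k}}$,
\[
    \bbP\bigl(a_k\le M<a_{k+1},\ \bfW_{\tau_{\cD_{a_k}}}\in a_k+i\,\D y\bigr)
    =
    \omega^{a_k+iy}_{\cD_{a_{k+1}}}(\cC_{a_{k+1}})\,\omega^0_{\cD_{a_k}}(a_k+i\,\D y)
    = \eps\, q_\eps(a_k,y)\,\omega^0_{\cD_{a_k}}(a_k+i\,\D y).
\]
On this event, $M\in[a_k,a_{k+1})$. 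Moreover, $|Y_T-y|$ is at most the oscillation of $Y$ over the excursion between $\tau_{\cD_{a_k}}$ and $T$, during which $X$ stays below $a_{k+1}$ and the path remains in $\cD_{a_{k+1}}$. We will show this oscillation is small in probability as $\eps\to0$. Indeed, the time $T$ lies in the last visit structure near the level, and $Y_T-Y_{\tau_{\cD_{a_k}}}\to0$ a.s. because $\tau_{\cD_{a_k}}\uparrow T$ as $a_k\uparrow M$ by continuity and a.s. uniqueness of the argmax. Consequently, by uniform continuity of $f$ on compacts and dominated convergence,
\[
    \bbE[f(M,Y_T)]=\lim_{n\to\infty}\sum_{k}\eps\int_{-b_{a_k}}^{b_{a_k}} f(a_k,y)\,q_\eps(a_k,y)\,\omega^0_{\cD_{a_k}}(a_k+i\,\D y).
\]
A cleaner route is to use $\tau_{\cD_{M^-}}$ directly: define $\tilde a=a_k$ on the event and compare $f(M,Y_T)$ with $f(a_k,Y_{\tau_{\cD_{a_k}}})$. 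The difference tends to $0$ almost surely, which is all that is needed.

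\textbf{Passing to the integral.} It remains to identify the Riemann sum limit with $\int_0^1\int f(a,y)\eta_a(y)\,\omega^0_{\cD_a}(a+i\,\D y)\,\D a$. This is the main obstacle: it requires convergence of $q_\eps(a,y)\to\eta_a(y)$ with enough uniformity, and $q_\eps$ blows up near the chord endpoints $y\to\pm b_a$, where the arc $\cC_{a+\eps}$ is close. We will make this explicit using Proposition~\ref{p:dirichlet}:
\[
    q_\eps(a,y)=\frac{1}{\eps\beta_{a+\eps}}\arg \mathfrak{l}_{a+\eps}(a+iy),
\]
which is jointly smooth in $(a,\eps,y)$ for $|y|<b_a$. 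We will show that $\eps\mapsto\arg\mathfrak{l}_{a+\eps}(a+iy)/\eps$ converges locally uniformly on $\{(a,y):|y|<b_a\}$, which follows from a first-order Taylor expansion in $\eps$, since $\arg\mathfrak{l}_a(a+iy)=0$.

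To handle the endpoints, we will dominate the contribution near $|y|\approx b_a$. Integrating $q_\eps$ against the density of Proposition~\ref{p:Va-density} gives exactly $\bbP(a_k\le M<a_{k+1})/\eps$, which converges to the density of $M$ obtained from \eqref{eq:cdf_M}. This tightness, together with Fatou's lemma applied to the nonnegative parts of $f$ (splitting $f=f^+-f^-$ and using $\|f\|_\infty-f\ge0$), will yield the limit. Specifically, Fatou's lemma gives inequalities in both directions for $\|f\|_\infty\pm f$, and the total masses match because
\[
    \int\eta_a\,\D\omega^0_{\cD_a}=\frac{\D}{\D a}\bbP(M<a).
\]
This identity is itself obtained by the same Fatou argument with $f\equiv1$, combined with the explicit derivative of \eqref{eq:cdf_M}.
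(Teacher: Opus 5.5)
The proposal has a genuine gap at its last step: the total-mass identity is asserted via Fatou's lemma, which only gives one inequality. The rest is sound. Your strong Markov decomposition at $\tau_{\cD_{a_k}}$ and the pathwise limit $\tau_{\cD_a}\uparrow T$ as $a\uparrow M$ (your ``cleaner route'') are what the paper does. Your Fatou/Scheff\'e scheme for the Riemann-sum limit is a legitimate alternative to the paper's dominated convergence. That scheme closes only if the limit measure $\int_0^1\int\eta_a\,\D\omega^0_{\cD_a}\,\D a$ has total mass $1$, and your justification of this is circular. Fatou's lemma with $f\equiv1$ gives only
\[
\int_{-b_a}^{b_a}\eta_a(y)\,\omega^0_{\cD_a}(a+i\,\D y)\;\le\;\liminf_{\eps\downarrow0}\frac{\bbP(a\le M<a+\eps)}{\eps}=\frac{\D}{\D a}\bbP(M<a),
\]
and never the reverse inequality. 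Convergence of the masses $\int q_\eps\,\D\omega^0_{\cD_a}$, which you call ``tightness'', does not help either. Tightness is automatic on the compact chord and does not prevent the mass of $q_\eps\,\D\omega^0_{\cD_a}$ from concentrating at the endpoints $y\to\pm b_a$, where $q_\eps$ blows up. Ruling out that concentration is exactly the ``main obstacle'' you named, and the proposal never addresses it.

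You need an independent input to close this. The paper uses a domination bound. For $a\in[\delta,1-\delta]$ it shows $q_\eps(a,y)=O_\delta\bigl(1/(b_a^2-y^2)\bigr)$. In the variable $r=\rho_a(y)$ this gives $q_\eps\,p=O_\delta(r^{\underline{\lambda}-2})$ near $0$ and $O_\delta(r^{-\underline{\lambda}})$ near $\infty$, with $\underline{\lambda}>1$. Dominated convergence then applies, and the cutoff $\delta$ is removed separately.

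Alternatively, within your own scheme, you can prove the mass identity by direct computation. With $r=\rho_a(y)$ one has $\eta_a=(1+r)^2/(2\beta_a b_a r)$. The integrals $J(\lambda_a\pm1)$ and $J(\lambda_a)$ from the proof of Theorem~\ref{thm:area} then give
\[
\int\eta_a\,\D\omega^0_{\cD_a}=\frac{\lambda_a^2}{\pi b_a}=\frac{\pi}{b_a\beta_a^2}.
\]
This equals $\frac{\D}{\D a}\,\frac{2\arcsin a}{\pi-\arccos a}$, using $2\beta_a-\theta_a=\pi$. With that computation in place, your Fatou argument applied to $\|f\|_\infty\pm f$ does prove the proposition. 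It also spares you both the explicit uniform bound on $q_\eps$ and the separate treatment of $a$ near $0$ and $1$.
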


\begin{proof}
    Note that $T$ is \emph{not} a stopping time,
    so we first use the strong Markov property to prove the following identity for the closely related  probabilities
    \begin{equation}
        \label{eq:pom1}
        \bbP\bigl(
            M\in [a,a+\eps),\ Y_{\tau_{\cD_a}}\in B
        \bigr)
        =
        \int_B
            \omega^{a+iy}_{\cD_{a+\eps}}(\cC_{a+\eps})
            \,\omega^0_{\cD_a}(a+i\,\D y),
    \end{equation}
    where $a\in (0,1)$, $\eps > 0$, and $ B \subset [-b_a, b_a]$.

    Consequently, for every bounded function $g$,
    \begin{equation}
        \label{eq:pom2}
        \bbE\bigl[
            g(Y_{\tau_{\cD_a}})
            1_{M\in [a,a+\eps)}
        \bigr]
        =
        \int
            g(y)\,
            \omega^{a+iy}_{\cD_{a+\eps}}(\cC_{a+\eps})
            \,\omega^0_{\cD_a}(a+i\,\D y).
    \end{equation}

    We use the strong Markov property at the first exit time $\tau_{\cD_a}$ of $\bfW$ from $\cD_a$ (see e.g.\ \cite[Remark~2.17]{MortersPeres}).
    For any event $E$ measurable up to time $\tau_{\cD_a}$ and bounded function $F$ on path space,
    \begin{equation}
        \label{eq:markov}
        \bbE\bigl[
            1_E \, F(\theta_{\tau_{\cD_a}}\bfW) 
        \bigr]
        =
        \bbE\bigl[
            1_E \, \bbE_{\bfW_{\tau_{\cD_a}}}[F(\bfW)]
        \bigr],
    \end{equation}
    where $\theta_{\tau_{\cD_a}}$ denotes the time shift by $\tau_{\cD_a}$.

    Take
    \[
        E=\{\bfW_{\tau_{\cD_a}}\in a+iB\},
    \]
    and note that, on $E$, 
    \[
        M \in [a,a+\eps)
        \qquad \text{if and only if} \qquad
        \tau_{\bbD} \circ \theta_{\tau_{\cD_a}} < \tau_{\cD_{a+\eps}} \circ \theta_{\tau_{\cD_a}}.
    \]
    Hence, with
    \[
        F(\bfW)
        =
        1_{\tau_{\bbD}<\tau_{\cD_{a+\eps}}},
    \]
    the strong Markov property \eqref{eq:markov} gives
    \begin{align*}
        \bbP\bigl(M\in [a,a+\eps),\ Y_{\tau_{\cD_a}}\in B\bigr)
        &=
        \bbE\bigl[
            1_E \,
            \bbE_{\bfW_{\tau_{\cD_a}}}[F(\bfW)]
        \bigr].
    \end{align*}
    By the definition of harmonic measure on $\cD_a$, the right-hand side is
    \begin{align*}
        \int_B
            \bbE_{a+iy}[F(\bfW)] \,
            \omega^0_{\cD_a}(a+i\,\D y)
        &=
        \int_B
            \bbP_{a+iy}(\tau_{\bbD}<\tau_{\cD_{a+\eps}}) \,
            \omega^0_{\cD_a}(a+i\,\D y) \\
        &=
        \int_B
            \omega^{a+iy}_{\cD_{a+\eps}}(\cC_{a+\eps}) \,
            \omega^0_{\cD_a}(a+i\,\D y).
    \end{align*}
    This proves \eqref{eq:pom1},
    and \eqref{eq:pom2} follows immediately.

    It remains to pass from $Y_{\tau_{\cD_a}}$ to $Y_T$.
    The crucial idea is to exploit the obvious fact that, for a fixed path of $\bfW$,
    as $a \uparrow M$, we have that $\tau_{\cD_a} \uparrow T$ and hence $Y_{\tau_{\cD_a}} \to Y_T$.
    To formally use this observation,
    let $K_n$ be the unique integer such that 
    \[
        M \in [K_n/n,(K_n+1)/n).
    \]
    By the pathwise observation above, $Y_{\tau_{\cD_{K_n/n}}} \to Y_T$,
    and hence for every bounded continuous function $f$,
    dominated convergence gives
    \[
        \bbE[f(M,Y_T)]
        =
        \lim_{n\to\infty}
            \bbE\bigl[
                f(K_n/n,Y_{\tau_{\cD_{K_n/n}}})
            \bigr].
    \]
    Conditioning on the value of $K_n$, and using \eqref{eq:pom2}, we obtain
    \begin{align*}
        \bbE\bigl[
            f(K_n/n,Y_{\tau_{\cD_{K_n/n}}})
        \bigr]
        &=
        \sum_{k=0}^{n-1}
        \bbE\bigl[
            f(k/n,Y_{\tau_{\cD_{k/n}}})
            1_{\{k/n\leq M<(k+1)/n\}}
        \bigr] \\
        &=
        \sum_{k=0}^{n-1}
            \int_{-b_{k/n}}^{b_{k/n}}
                f(k/n,y) \,
                \omega^{k/n+iy}_{\cD_{(k+1)/n}}(\cC_{(k+1)/n}) \,
                \omega^0_{\cD_{k/n}}(k/n+i\,dy).
    \end{align*}

    We conclude that
    \[
        \bbE[f(M,Y_T)]
        =
        \lim_{n\to \infty}
            \sum_{k=0}^{n-1}
                \frac1n
                \int_{-b_{k/n}}^{b_{k/n}}
                    f(k/n,y) \,
                    q_{1/n}(k/n,y)\,
                    \omega^0_{\cD_{k/n}}(k/n+i\,dy),
    \]
    and it remains to prove that the right hand side converges to
    \[
        \int_0^1
            \int_{-b_a}^{b_a}
                f(a,y)\eta_a(y) \,
                \omega^0_{\mathcal{D}_a}(a+i\,\D y) \, \D a.
    \]
    We use the change of variables $r=\rho_a(y) \in (0,\infty)$ from Proposition \ref{p:Va-density},
    whose inverse is
    \[  
        y_a(r) := \frac{b_a(r-1)}{r+1}.
    \]
    Note that in the $r$-variable,
    \[
        \omega^0_{\cD_a}(a+i\,\D y)
        =
        p(a,r) \, \D r,
    \]
    where
    \[
        p(a,r)
        =
        \frac{\lambda_a r^{\lambda_a-1}\sin\Phi_a}{\pi\bigl(r^{2\lambda_a}-2r^{\lambda_a}\cos\Phi_a + 1\bigr)}.
    \]
    Using Proposition~\ref{p:dirichlet}, we have
    \begin{equation}\label{eq:exit_through_arc}
        q_\eps(a,y) = \frac{\arg \left( \mathfrak{l}_{a+\eps}(a+iy) \right)}{\eps\, \beta_{a+\eps}}.
    \end{equation}
    It is straightforward to compute that
    \begin{equation*}
        \mathfrak{l}_{a+\eps}(a+iy)
         = \frac{1-(a+\eps)^2-\eps^2-y^2 + 2\eps i \sqrt{1 - (a+\eps)^2}}{\eps^2 + (\sqrt{1 - (a+\eps)^2} - y)^2}.
    \end{equation*}
    Hence
    \begin{equation}\label{eq:arg_la}
        \arg \left( \mathfrak{l}_{a+\eps}(a+iy) \right) = \arctan \left( \frac{2\eps\sqrt{1 - (a+\eps)^2}}{1-(a+\eps)^2-\eps^2-y^2} \right).
    \end{equation}
    Plugging \eqref{eq:exit_through_arc} and \eqref{eq:arg_la} into \eqref{eq:eta-conv} and using $\lim_{x\to 0} (\arctan x)/x = 1$ gives
    \begin{equation*}
        \eta_a(y) = \frac{2b_a}{\beta_a (b_a^2 - y^2)}.
    \end{equation*}

    Fix some small $\delta>0$.
    It is easy to see, using \eqref{eq:exit_through_arc} and \eqref{eq:arg_la},
    that for all small $\eps>0$ and $a \in [\delta,1-\delta]$,
    \[
        q_\eps(a,y) 
        =
        O_\delta\mleft(\frac1{b_a^2-y^2}\mright),
    \]
    that is, in the $r$-coordinate,
    \[
        q_\eps(a,y_a(r)) 
        =
        O_\delta\mleft(\frac{(1+r)^2}{r}\mright).
    \]
    Letting $\underline{\lambda} = \inf_{a \in [\delta,1-\delta]} \lambda_a > 1$,
    we obtain an integrable bound
    \begin{equation}
        \label{eq:integrable-bound}
        q_\eps(a,y_a(r)) \, p(a,r)
        = \begin{cases}
            O_\delta(r^{\underline{\lambda}-2}), & 0<r\leq 1, \\
            O_\delta(r^{-\underline{\lambda}}), & r > 1.
        \end{cases}
    \end{equation}
    Note that convergence
    \[
        q_{1/n}(a,y_a(r)) \to \eta_a(y_a(r))
    \]
    is uniform on $[\delta,1-\delta] \times [1/R,R]$ for arbitrary $R>0$.
    Therefore the usual Riemann sums converge on this set,
    and the integrable bound \eqref{eq:integrable-bound} allows us to let $R \to \infty$ to conclude
    \begin{align*}
        &\lim_{n\to \infty}
            \sum_{\delta n\leq k \leq (1-\delta)n}
                \frac1n
                \int_{-b_{k/n}}^{b_{k/n}}
                    f(k/n,y) \,
                    q_{1/n}(k/n,y)\,
                    \omega^0_{\cD_{k/n}}(k/n+i\,\D y)\\
        &\qquad \qquad=
        \int_\delta^{1-\delta}
            \int_{-b_a}^{b_a}
                f(a,y)\eta_a(y) \,
                \omega^0_{\mathcal{D}_a}(a+i\,\D y) \, \D a.
    \end{align*}
    Hence, it remains to remove the cutoff $\delta\to 0$.
    Indeed, it is easy to see that these endpoint contributions are negligible.
    On the discrete side, they are bounded by
    \[
        \|f\|_\infty \bbP(M \notin (\delta+1/n,1-\delta)),
    \]
    which becomes negligible as $\delta \to 0$ and $n\to \infty$.
    On the limiting-integral side,
    the contribution is bounded by
    \begin{equation}\label{eq:bound_integral-side}
        \|f\|_\infty 
        \int_{-b_a}^{b_a}
                \eta_a(y) \,
                \omega^0_{\mathcal{D}_a}(a+i\,\D y)
        = \|f\|_\infty \frac{\lambda_a^2}{\pi b_a},
    \end{equation}
    which is integrable on $(0,1)$, and hence negligible outside $[\delta,1-\delta]$. The precise computation of the integral in~\eqref{eq:bound_integral-side} is quite technical, but completely analogous to the computation of the integral in~\eqref{eq:area_int_step1} which appears just below, and where we provide more details.
\end{proof}

With the distribution of $(M,Y_T)$ in hand,
it is now straightforward to calculate $\bbE[\sfA_{\tau_\bbD}]$.
\begin{proof}[Proof of Theorem \ref{thm:area}]
    Using the notation from Section \ref{sec:truncated-disk},
    Proposition~\ref{p:Va-density} gives
    \[
        \omega_{\cD_a}^0(a+i\,\D y)
        =
        \frac{2b_a\lambda_a \rho_a(y)^{\lambda_a-1} \sin\Phi_a}{\pi(b_a-y)^2 (\rho_a(y)^{2\lambda_a} - 2\rho_a(y)^{\lambda_a}\cos\Phi_a + 1)}\, \D y.
    \]
    By Proposition \ref{p:mainmarkov},
    \begin{align}\label{eq:area_int_step1}
        & \bbE [M^2 - Y_T^2] \nonumber \\
        & \!=\! \int_0^1\! \int_{-b_a}^{b_a} (a^2 - y^2) \frac{2b_a}{\beta_a(b_a^2 - y^2)} \frac{2b_a\lambda_a\rho_a(y)^{\lambda_a-1} \sin \Phi_a}{\pi(b_a-y)^2(\rho_a(y)^{2\lambda_a} - 2\rho_a(y)^{\lambda_a} \cos\Phi_a + 1)} \D y \D a \nonumber \\
        & \!=\! \int_0^1 \!\int_0^{\infty} \mleft((2a^2-1)(\rho_a(y)^2+1) + 2\rho_a(y)\mright) \frac{\lambda_a^2\rho_a(y)^{\lambda_a-2} \sin\Phi_a}{2\pi^2 b_a (\rho_a(y)^{2\lambda_a} - 2\rho_a(y)^{\lambda_a} \cos\Phi_a + 1)} \D{\rho_a(y)} \D a \nonumber\\
        & \!=\! \int_1^2 \frac{\sin \Phi_a}{2\pi} \int_0^{\infty} \frac{\rho_a(y)^{\lambda_a-2}[\cos(2\pi/\lambda_a)(\rho_a(y)^2+1) + 2\rho_a(y)]}{\rho_a(y)^{2\lambda_a} - 2\rho_a(y)^{\lambda_a} \cos\Phi_a + 1} \D{\rho_a(y)} \D{\lambda_a},
    \end{align}
with $\Phi_a = \lambda_a \theta_a = 2\pi - \pi\lambda_a$, where in the second line we used the relation
\[
y = b_a(\rho_a(y)-1)/(\rho_a(y)+1),
\]
and in the third line we used the relation $a = -\cos(\pi/\lambda_a)$. We first evaluate the inner integral. Denote by
    \begin{equation}\label{eq:def_of_J}
        J(s) = \int_0^{\infty} \frac{\rho_a(y)^{s-1}}{\rho_a(y)^{2\lambda_a} - 2\rho_a(y)^{\lambda_a} \cos\Phi_a + 1} \D{\rho_a(y)}.
    \end{equation}
    In terms of the function $J(s)$, the inner integral in \eqref{eq:area_int_step1} is
    \begin{equation}\label{eq:inner_int_in_terms_of_J}
        \cos\left( \frac{2\pi}{\lambda_a} \right) [J(\lambda_a + 1) + J(\lambda_a - 1)] + 2J(\lambda_a).
    \end{equation}
    Using the substitution $r = \rho_a(y)^{\lambda_a}$ in \eqref{eq:def_of_J} and then \cite[Eq.~3.252.12]{Gradshteyn_Ryzhik} with $\mu = s/\lambda_a$, $a = 1$, and $t = \pi - \Phi_a$, we get
    \begin{equation*}
        J(s) = \frac{\pi}{\lambda_a \sin\Phi_a} \frac{\sin \left( (1-s/\lambda_a) (\pi - \Phi_a) \right)}{\sin(\pi s / \lambda_a)}.
    \end{equation*}
    It is now straightforward to see that
    \begin{equation*}
        J(\lambda_a-1) = \frac{\pi}{\lambda_a \sin\Phi_a}, \quad J(\lambda_a+1) = \frac{\pi}{\lambda_a \sin\Phi_a}, \quad \text{and} \quad J(\lambda_a) = \frac{\pi(\lambda_a-1)}{\lambda_a \sin\Phi_a}.
    \end{equation*}
    Hence,
    \begin{equation*}
        \cos\left( \frac{2\pi}{\lambda_a} \right) [J(\lambda_a + 1) + J(\lambda_a - 1)] + 2J(\lambda_a) = \frac{2\pi}{\lambda_a \sin\Phi_a} \left[ \cos\left( \frac{2\pi}{\lambda_a} \right) + \lambda_a - 1 \right].
    \end{equation*}
    Plugging this into \eqref{eq:area_int_step1} we get
    \begin{align*}
        \bbE[M^2 - Y_T^2]
        & = \int_1^2 \frac{\sin\Phi_a}{2\pi} \frac{2\pi}{\lambda_a \sin\Phi_a} \left[ \cos\left( \frac{2\pi}{\lambda_a} \right) + \lambda_a - 1 \right] \D{\lambda_a} \\
        & = \int_1^2 \frac{\lambda_a - 1 + \cos(2\pi/\lambda_a)}{\lambda_a} \D{\lambda_a} \\
        & = 1 - \log 2 + \int_{\pi}^{2\pi} \frac{\cos u}{u} \D u \approx 0.210624,
    \end{align*}
    where in the last line we used substitution $u = 2\pi/\lambda_a$.
    Using $\bbE[\sfA_{\tau_\bbD}] = \pi \bbE[M^2 - Y_T^2]$ we get the desired result.
\end{proof}


\section{Star hull and topological hull}\label{sec:star_topological}

In this last section we mention two different types of hulls, namely the star hull and topological hull. We explicitly compute the expected area of the star hull, and we provide a Monte Carlo estimate for the expected area of the topological hull

\subsection{Star hull}
Roughly speaking, the \emph{star hull} of a set $\cA\subset\mathbb{R}^2$, denoted by $\strh \cA$, is the smallest star-shaped set (with respect to the origin) that contains $\cA$. More precisely, we have
\begin{equation*}
\strh\cA=\{\rho\boldsymbol{x}:0\leq \rho\leq 1\text{ and }\boldsymbol{x}\in\cA\}.
\end{equation*}
The star hulls of planar Brownian motion and bridge run for unit time were recently studied in \cite{star_hull}, where exact formulas for their expected areas were computed. See Figure \ref{fig:brownian-hulls} for a depiction of the star hull of planar Brownian motion run until first exiting the disk.

For $t\geq 0$, let $\cH_t^\star=\strh \bfW_{[0,t]}$. In the following theorem, we compute $\bbE[\Area(\cH^\star_{\tau_\bbD})]$. The proof is essentially the same argument from Theorem \ref{thm:main} applied to the \emph{radial function} of the trace $\bfW_{[0,\tau_\bbD]}$ instead of the support function. For this reason, we combine all of the steps into one proof and omit some of the repetitive details.

\begin{theorem}\label{thm:star_hull}
The expected value of the area of the star hull of standard planar Brownian motion run until the exit time from the unit disk is given by
\[
\bbE[\Area(\cH^\star_{\tau_\bbD})]=\pi-\frac{8}{3}\approx 0.474925.
\]
\end{theorem}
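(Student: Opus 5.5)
We use the radial function in place of the support function. For $\theta\in[0,2\pi)$ set
\[
R(\theta)=\sup\{\,|\bfW_t| : 0\le t\le\tau_\bbD,\ \bfW_t\in \be_\theta[0,\infty)\,\},
\]
with the convention $\sup\emptyset=0$. The star hull $\cH^\star_{\tau_\bbD}$ is star-shaped with respect to $\boldsymbol 0$, and its boundary in direction $\theta$ lies at distance $R(\theta)$ from the origin. Hence
\[
\Area(\cH^\star_{\tau_\bbD})=\tfrac12\int_0^{2\pi}R(\theta)^2\D\theta .
\]
By rotational invariance of $\bfW$ and of $\bbD$, we have $R(\theta)\stackrel{d}{=}R(0)$ for every $\theta$. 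Tonelli's theorem then gives $\bbE[\Area(\cH^\star_{\tau_\bbD})]=\pi\,\bbE[R(0)^2]$. Since $R(0)\in[0,1]$,
\[
\bbE[R(0)^2]=\int_0^1 2a\,\bbP(R(0)\ge a)\D a ,
\]
so everything reduces to the law of $R(0)$. The claimed value is equivalent to $\bbE[R(0)^2]=1-\tfrac{8}{3\pi}$.

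As in \eqref{eq:chord_exit}, path continuity gives that $R(0)\ge a$ if and only if $\bfW$ hits the slit $\cS_a=[a,1)$ before $\tau_\bbD$. This is the harmonic measure of $\cS_a$ seen from $0$ in the slit disk $\bbD\setminus\cS_a$. We compute it by a chain of conformal maps, mirroring Lemma~\ref{l:Da-to-Wa} and Proposition~\ref{p:dirichlet}.

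\textbf{Step 1.} The map $w_1=\frac{1+z}{1-z}$ sends $\bbD$ to the right half-plane. It sends $0$ to $1$ and the slit to $[A,\infty)$, where $A=\frac{1+a}{1-a}$.

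\textbf{Step 2.} The map $w_2=w_1^2$ sends the slit right half-plane to $\bbC\setminus\big((-\infty,0]\cup[A^2,\infty)\big)$. The point $1$ stays fixed, the circle goes to $(-\infty,0]$, and the slit goes to $[A^2,\infty)$.

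\textbf{Step 3.} The M\"obius map $\zeta=\frac{w_2}{A^2-w_2}$ sends $(-\infty,0]$ to $(-1,0]$ and $[A^2,\infty)$ to $(-\infty,-1)$. So the domain becomes $\bbC\setminus(-\infty,0]$, with the starting point at $\zeta_0=\frac{1}{A^2-1}>0$.

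\textbf{Step 4.} Apply $\xi=\sqrt\zeta$ (principal branch) to land in the right half-plane. The starting point goes to $\xi_0=(A^2-1)^{-1/2}>0$. The two sides of the slit part coming from $\cS_a$ become the two rays $\{\pm it: t>1\}$, and the part coming from the circle becomes the segment $i(-1,1)$.

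The harmonic measure from the real point $\xi_0$ of the segment $i(-1,1)$ in the right half-plane is $\frac{2}{\pi}\arctan(1/\xi_0)$. Therefore
\[
\bbP(R(0)\ge a)=1-\tfrac{2}{\pi}\arctan\sqrt{A^2-1}=\tfrac{2}{\pi}\arctan\frac{1}{\sqrt{A^2-1}} .
\]
Since $A^2-1=\frac{4a}{(1-a)^2}$, this simplifies to
\[
\bbP(R(0)\ge a)=\tfrac{2}{\pi}\arctan\frac{1-a}{2\sqrt a}.
\]
As a sanity check, this equals $1$ at $a=0$ and $0$ at $a=1$.

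Finally we evaluate $\bbE[R(0)^2]=\frac{4}{\pi}\int_0^1 a\arctan\frac{1-a}{2\sqrt a}\D a$. We substitute $a=s^2$ and integrate by parts. The derivative of $\arctan\frac{1-s^2}{2s}$ is $-\frac{2}{1+s^2}$, and this should reduce the integral to elementary rational integrals. Alternatively, note that $\arctan\frac{1-s^2}{2s}=\frac\pi2-2\arctan s$, which makes the evaluation immediate:
\[
\int_0^1 2s^3\Big(\tfrac\pi2-2\arctan s\Big)\D s=\tfrac{\pi}{4}-4\int_0^1 s^3\arctan s\D s .
\]
The last integral equals $\frac{1}{6}$, so $\bbE[R(0)^2]=\frac4\pi\big(\frac\pi4-\frac23\big)=1-\frac{8}{3\pi}$. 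Multiplying by $\pi$ gives $\pi-\frac83$.

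The main obstacle is getting the conformal chain right: tracking which boundary arcs map where, and confirming that both sides of the slit $\cS_a$ are correctly accounted for in Step~4. The measurability and rotational-invariance steps are routine and parallel Section~\ref{sec:Cauchy}.
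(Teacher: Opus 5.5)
Your proof is correct and follows the paper's route: the polar area formula for the radial function, rotational invariance to reduce to $\pi\,\bbE[R(0)^2]$, and the law of $R(0)$ as the harmonic measure of the slit $[a,1)$ in the slit disk. The only difference is in computing that harmonic measure: the paper moves the slit tip to the origin with the disk automorphism $\mathfrak{m}_a(z)=\frac{z-a}{1-az}$ and cites the classical value $1-\frac{4}{\pi}\arctan\sqrt{a}$, whereas your explicit chain to the right half-plane derives it directly; your $\frac{2}{\pi}\arctan\frac{1-a}{2\sqrt{a}}$ is the same function by $\arctan\frac{1-s^2}{2s}=\frac{\pi}{2}-2\arctan s$, and your $\int_0^1 s^3\arctan s\,\D{s}=\frac{1}{6}$ is the paper's $\int_0^1 a\arctan\sqrt{a}\,\D{a}=\frac{1}{3}$ after $a=s^2$.
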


\begin{proof}
For $\theta\in [0,2\pi)$, let $r(\theta)= \sup\{\rho \geq 0 : \rho\,\be_\theta \in \bfW_{[0,\tau_\bbD]}\}$ denote the radial function of the trace $\bfW_{[0,\tau_\bbD]}$. By \cite[Proposition 3]{star_hull}, we can represent the area of the star hull by 
\begin{equation}\label{eq:polar_formula_star_hull}
	\Area(\cH^\star_{\tau_\bbD})=\frac{1}{2}\int_0^{2\pi} r(\theta)^2\D{\theta}.
\end{equation}
Similarly to \eqref{eq:exp_per_via_exp_M}, applying Tonelli's theorem along with rotational invariance results in  
\begin{equation}\label{eq:star_area}
\bbE[\Area(\cH^\star_{\tau_\bbD})]=\pi\, \bbE\mleft[r(0)^2\mright].
\end{equation}

In order to compute the right-hand side of \eqref{eq:star_area}, we derive the distribution of $r(0)$ using the same method that was used for $M$. Towards this end, for $a\in [0,1)$, let $\cS_a$ denote the \emph{slit disk} formed by removing the line segment $[a,1)$ from $\bbD$. More precisely, 
\[
\cS_a=\bbD\setminus\{z\in \bbC:\Im z=0\text{ and }\Re z\geq a\}.
\]
The key observation is that $r(0)\geq a$ if and only if $\bfW$ hits the line segment $[a,1]$ by the time it exits $\bbD$. In terms of harmonic measure in the domain $\cS_a$ seen from $0$, we have
\begin{equation}\label{eq:radial_harmonic}
\bbP\big(r(0)\geq a\big)=\omega^0_{\cS_a}\big([a,1]\big).
\end{equation}

We use another M\"{o}bius transformation to map $\cS_a$ onto $\cS_0$. This target domain is a convenient choice since harmonic measure in $\cS_0$ has been studied before in the context of Milloux's problem and Beurling's projection estimate; see \cite[Section 3.2]{Ahlfors2} and \cite[Theorem 3.76]{Lawler}. Specifically, for $a\in[0,1)$, consider the M\"{o}bius transformation given by
\[
\mathfrak{m}_a(z)=\frac{z-a}{1-az}.
\]
This function maps $\bbD$ conformally onto itself; see \cite[Section 2.1]{Lawler}. Moreover, $\mathfrak{m}_a$ is also a linear fractional transformation, so it maps all of $\bbC$ conformally onto itself. It follows that $\mathfrak{m}_a$ is a continuous bijection of $\overline{\bbD}$ onto itself. In particular, it maps the segment $[a,1]$ onto the segment $[0,1]$, so that $\mathfrak{m}_a(\cS_a)=\cS_0$ with $\mathfrak{m}_a(0)=-a$. We can now use conformal invariance of harmonic measure along with reflection symmetry to rewrite \eqref{eq:radial_harmonic} as 
\begin{align}
\bbP\big(r(0)\geq a\big)=\omega^{-a}_{\cS_0}\big([0,1]\big)&=\omega^a_{-\cS_0}\big([-1,0]\big)\nonumber \\
&=1-\frac{4}{\pi}\arctan\sqrt{a},\label{eq:radial_distribution}
\end{align}
where the last equality is an easy consequence of conformal invariance, see e.g.\ \cite[page 42]{Ahlfors2}. 

It remains to use \eqref{eq:radial_distribution} to compute the expected value of $r(0)^2$. First note the identity
\[
\int_0^1 a\arctan\sqrt{a}\D{a}=\frac{1}{3},
\]
which is straightforward to deduce using the change of variables $a\mapsto t^2$ followed by integration by parts. We can now use this identity along with \eqref{eq:star_area} and \eqref{eq:radial_distribution} to write
\begin{align*}
\bbE[\Area(\cH^\star_{\tau_\bbD})]&=2\pi\int_0^1 a\,\bbP\big(r(0)\geq a\big)\D{a}\\
&=2\pi\int_0^1 a\D{a}-8\int_0^1 a\arctan\sqrt{a}\D{a}\\
&=\pi-\frac{8}{3}.
\end{align*}
\end{proof}


\subsection{Topological hull and simulations}

In addition to the already mentioned convex and star hulls of the trajectory of planar Brownian motion stopped upon exiting the unit disk, we mention here a third type of hull, the so-called \emph{topological hull}. The topological hull of the trajectory (also known as the Brownian hull) is the union of the trace with all of the bounded connected components of its complement in the plane (see Figure~\ref{fig:brownian-hulls}). The expected area of the topological hull of the standard planar Brownian bridge was computed exactly in \cite{Garban_Ferreras}, while that of planar Brownian motion run for unit time was investigated in \cite{star_hull}. We denote the topological hull of planar Brownian motion run until exiting the unit disk by $\cH^{\mathrm{T}}_{\tau_{\bbD}}$. It is straightforward to see that the three hulls in this paper satisfy the inclusion chain 
\begin{equation}\label{eq:inclusion_chain}
\cH^{\mathrm{T}}_{\tau_{\bbD}}\subset\cH^\star_{\tau_{\bbD}}\subset \cH_{\tau_{\bbD}};
\end{equation}
refer to \cite[Lemma 2]{star_hull} for a proof. See Figure \ref{fig:brownian-hulls} for illustrations of all three of these hulls drawn for the same planar Brownian motion trajectory.

\begin{figure}[htbp]
    \centering

    \begin{subfigure}[t]{0.48\textwidth}
        \centering
        \includegraphics[width=\linewidth]{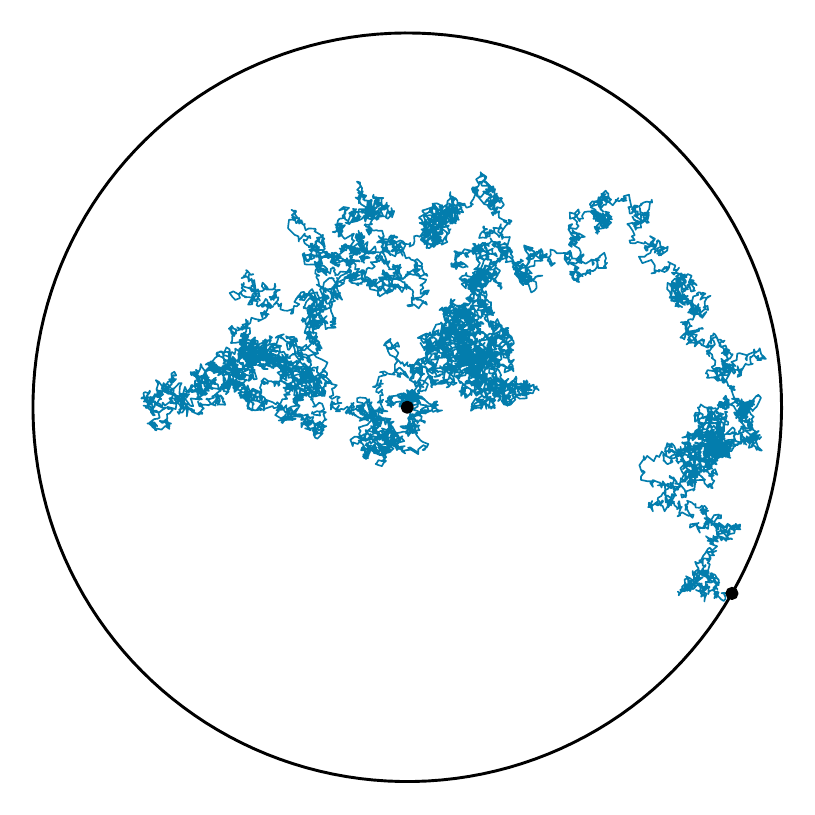}
        \caption{Brownian trajectory until exiting the disk.}
        \label{fig:brownian-path}
    \end{subfigure}
    \hfill
    \begin{subfigure}[t]{0.48\textwidth}
        \centering
        \includegraphics[width=\linewidth]{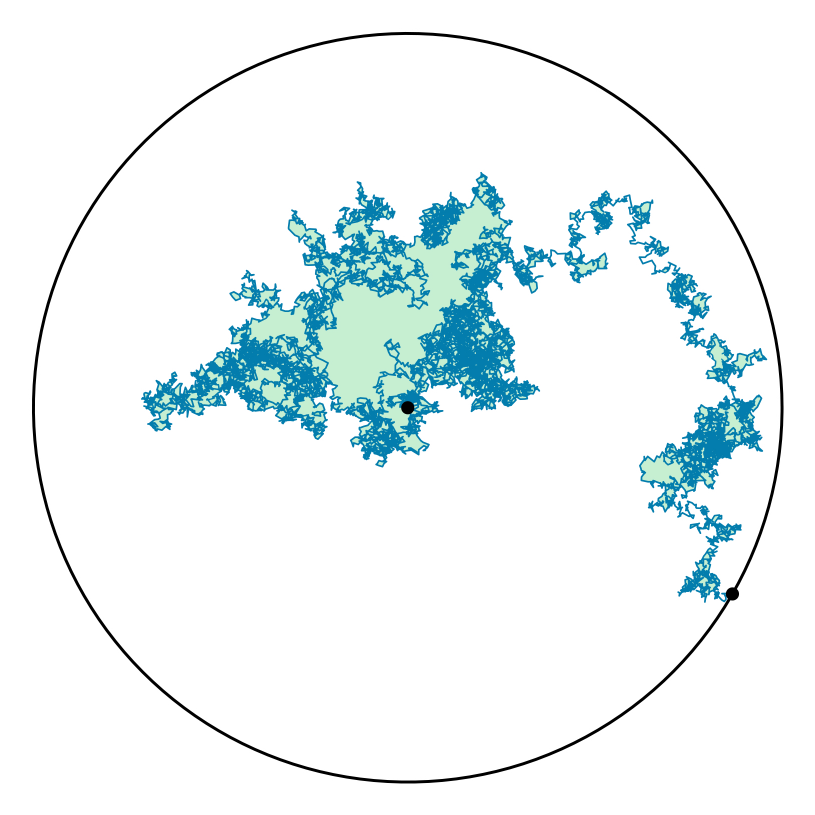}
        \caption{Topological hull of the same trajectory.}
        \label{fig:brownian-topological-hull}
    \end{subfigure}

    \vspace{0.8em}

    \begin{subfigure}[t]{0.48\textwidth}
        \centering
        \includegraphics[width=\linewidth]{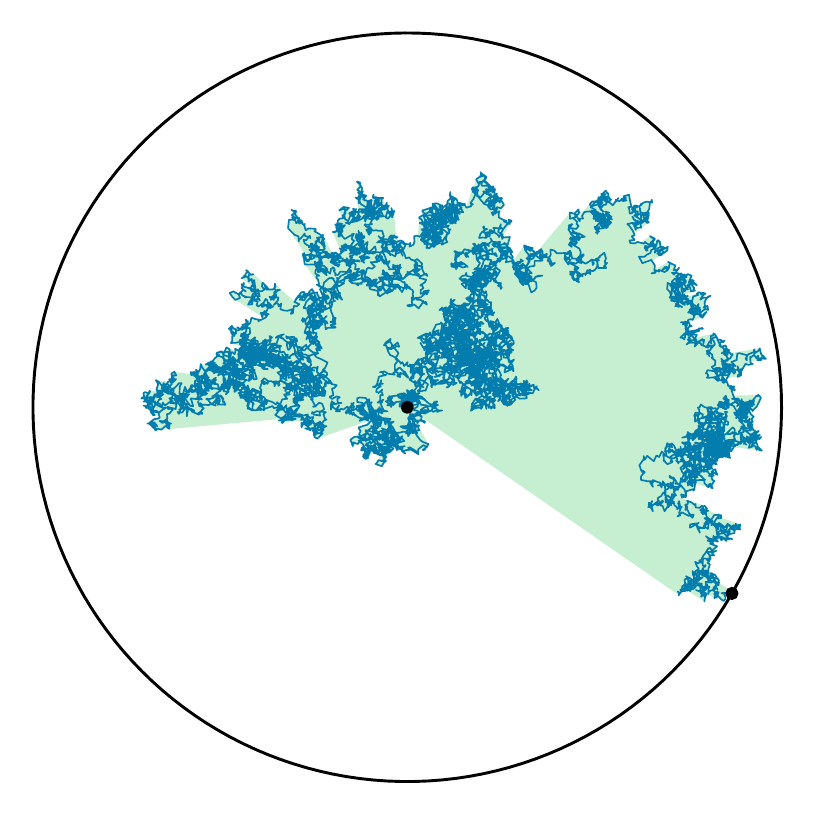}
        \caption{Star hull of the same trajectory.}
        \label{fig:brownian-star-hull}
    \end{subfigure}
    \hfill
    \begin{subfigure}[t]{0.48\textwidth}
        \centering
        \includegraphics[width=\linewidth]{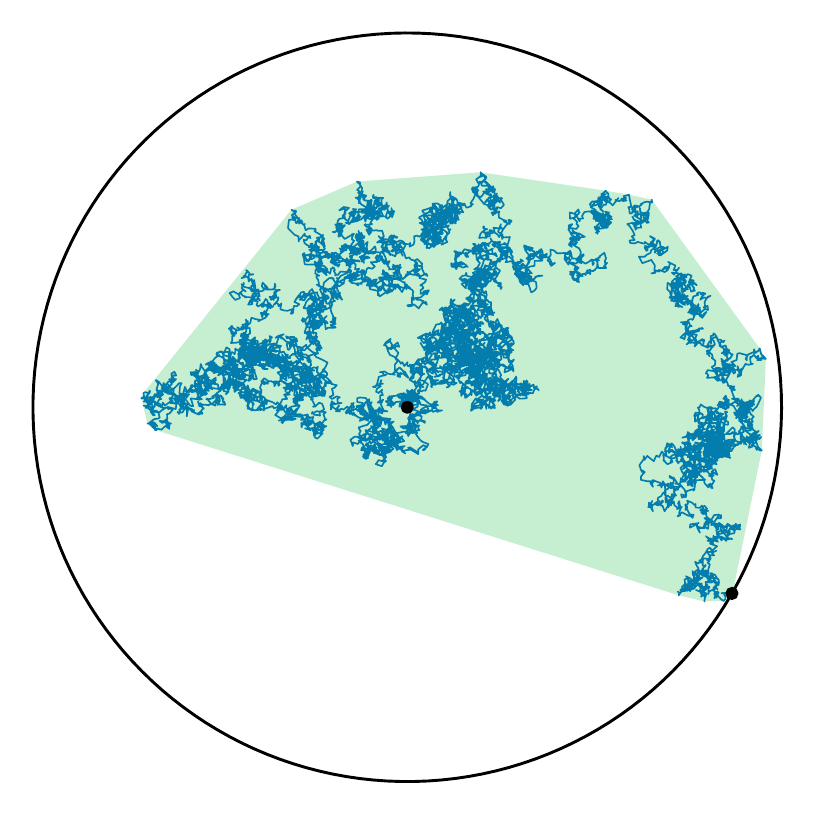}
        \caption{Convex hull of the same trajectory.}
        \label{fig:brownian-convex-hull}
    \end{subfigure}

    \caption{Illustrations of the same planar Brownian motion trajectory run until first exiting the unit disk, together with the three associated hulls.}
    \label{fig:brownian-hulls}
\end{figure}

For completeness, we ran simulations for all three types of hulls mentioned. Depending on the particular type of hull, we adjusted some of the path simulation parameters to account for the computational demands of the hull algorithm.



\subsubsection{Convex hull}\label{sec:conv_hull_area}
We simulated $10^5$ trajectories of the standard planar Brownian motion until it exits the unit disk $\bbD$ for the first time. The simulated trajectories had time increments of size $10^{-7}$, and steps were Gaussian (with mean zero, and variance $10^{-7}$). For each realization of the trajectory we found its convex hull, and computed the perimeter and the area of that convex hull. Since the function \texttt{scipy.spatial.ConvexHull} returns extremal points of the convex hull in counterclockwise order, computation of the perimeter is trivial, and for the area we use the shoelace formula. Moreover, for each realization of the trajectory we found the point at which the maximal displacement in the positive direction of the $x$-axis is attained, and we saved the $y$-coordinate of that point. This enabled us to obtain a Monte Carlo approximation of the quantity $\bbE[h'(0)^2]$ and, through \eqref{eq:area_via_h0}, to give an alternative Monte Carlo approximation of $\bbE[\sfA_{\tau_{\bbD}}]$. 

The Monte Carlo estimate of the expected perimeter was $3.2136$. Comparing it to the numerical value of $3.2148$ from Theorem \ref{thm:main}, we see that the error is of size $10^{-3}$. As explained, we obtained a Monte Carlo estimate of the expected area in two different ways. Namely, by computing the area directly, and through the value of the $y$-coordinate of the point which corresponds to the maximal displacement of the simulated trajectory in the positive direction of the $x$-axis. The direct computation gives a Monte Carlo estimate of $0.6612$, and the alternative one gives $0.6618$. Notice that here the error is even smaller, namely $10^{-4}$, since the numerical value (computed in Theorem~\ref{thm:area}) is 0.6617.

\subsubsection{Star hull}
To obtain a Monte Carlo estimate of the expected area of the star hull, we rely on the standard polar formula \eqref{eq:polar_formula_star_hull}. In the simulations, the Brownian path is approximated by a polygonal trajectory $w_0, w_1, \ldots, w_N$ obtained from a time discretization with step $\Delta t = 10^{-6}$, and taking Gaussian steps with expectation zero, and variance equal to $\Delta t$. The interval $[0, 2\pi)$ is divided into $m = 2000$ equally spaced angles $\theta_k = 2\pi k / m$. For each direction $\be_{\theta_k}$, the value $r(\theta_k)$ is computed as the maximal distance at which the ray $\{\rho\be_{\theta_k} : \rho \ge 0\}$ intersects the star hull of the polygonal path. Since the latter equals the union of the triangles $\conv\{0, w_i, w_{i+1}\}$, this reduces to intersecting the ray with each segment $[w_i, w_{i+1}]$. Concretely, for each segment we solve
\begin{equation*}
	\rho\be_{\theta_k} = (1-s)w_i + sw_{i+1},
\end{equation*}
with $\rho \ge 0$ and $s \in [0, 1]$. Admissible solutions correspond to intersections with the edge of the triangle, and the largest such $\rho$ over all segments yields $r(\theta_k)$. The area is then approximated by the Riemann sum
\begin{equation*}
	\frac{1}{2} \sum_{k = 0}^{m-1}r(\theta_k)^2 \Delta \theta, \qquad \Delta \theta = 2\pi/m.
\end{equation*}
A Monte Carlo estimate of the expected star hull area is obtained by averaging this over $10^5$ independent simulated trajectories. The obtained Monte Carlo estimate is $0.4725$ which is at distance of order $10^{-3}$ from the value $0.4749$ obtained in Theorem~\ref{thm:star_hull}.

\subsubsection{Topological hull}\label{sec:top_hull}

Unlike the convex and star hulls, the topological hull was simulated using a random walk on the square lattice $\mathbb{Z}^2$ stopped when its distance from the origin exceeded $10^3$. Then the algorithm from \cite{Richard} was used to determine the outer boundary of the topological hull of this lattice path, from which the integer-valued area was calculated using the shoelace formula. Lastly, this area was scaled by $10^{-6}$ to approximate the area of the Brownian topological hull. This procedure was repeated $10^5$ times and averaged to provide a Monte Carlo estimate of $0.2816$. While we don't have an exact expression for the expected area like we do with the other hulls, the inclusion chain \eqref{eq:inclusion_chain} implies that the expected area of the star hull provides an upper bound for the expected area of the topological hull.


\section*{Acknowledgments} 

\noindent
We thank Krunoslav Ivanovi\'{c} for fruitful discussions.

Financial support through the \emph{Croatian Science Foundation} under projects IP-2022-10-2277 (for S.\ \v{S}ebek) and IP-2022-10-5116 (for R.\ Mrazovi\'{c}) is gratefully acknowledged. This research was also funded by the European union--NextGenerationEU through the National Recovery and Resilience Plan 2021-2026 Institutional grant of University of Zagreb Faculty of Electrical Engineering and Computing (VALOR). This work was carried out within a project DIGIT.2.1.02.016 funded by the Digital, Innovation, and Green Technology Project – DIGIT Project (IBRD Loan No. 9558‑HR).

\bibliographystyle{abbrv}
\bibliography{CH_until_ET}

\begin{thebibliography}{10}

\bibitem{Ahlfors2}
L.~V. Ahlfors.
\newblock {\em Conformal invariants}.
\newblock AMS Chelsea Publishing, Providence, RI, 2010.
\newblock Topics in geometric function theory, Reprint of the 1973 original,
  With a foreword by Peter Duren, F. W. Gehring and Brad Osgood.

\bibitem{cauchy1832memoire}
A.~L. Cauchy.
\newblock {\em M\'{e}moire sur la rectification des courbes et la quadrature
  des surfaces courbes}.
\newblock Lith. de C. Mantoux, 1832.

\bibitem{CPS}
W.~Cygan, H.~Panzo, and S.~\v{S}ebek.
\newblock Bounds on the size of the convex hull of planar {B}rownian motion and
  related inverse processes.
\newblock {\em J. Korean Math. Soc.}, 62(5):1265--1295, 2025.

\bibitem{DeBruyneBenichouMajumdarSchehr}
B.~De~Bruyne, O.~B\'{e}nichou, S.~N. Majumdar, and G.~Schehr.
\newblock Statistics of the maximum and the convex hull of a {B}rownian motion
  in confined geometries.
\newblock {\em J. Phys. A}, 55(14):Paper No. 144002, 17, 2022.

\bibitem{DLMF}
{\it NIST Digital Library of Mathematical Functions}.
\newblock \url{https://dlmf.nist.gov/}, Release 1.2.6 of 2026-03-15.
\newblock F.~W.~J. Olver, A.~B. {Olde Daalhuis}, D.~W. Lozier, B.~I. Schneider,
  R.~F. Boisvert, C.~W. Clark, B.~R. Miller, B.~V. Saunders, H.~S. Cohl, and
  M.~A. McClain, eds.

\bibitem{ElBachir_PhD_thesis}
M.~El~Bachir.
\newblock {\em L'enveloppe convex du mouvement {B}rownien}.
\newblock Doctoral thesis, Universit\'{e} Toulouse III–Paul Sabatier, 1983.

\bibitem{Eldan}
R.~Eldan.
\newblock Volumetric properties of the convex hull of an {$n$}-dimensional
  {B}rownian motion.
\newblock {\em Electron. J. Probab.}, 19:no. 45, 34, 2014.

\bibitem{Garban_Ferreras}
C.~Garban and J.~A. Trujillo~Ferreras.
\newblock The expected area of the filled planar {B}rownian loop is {$\pi/5$}.
\newblock {\em Comm. Math. Phys.}, 264(3):797--810, 2006.

\bibitem{goldman}
A.~Goldman.
\newblock Le spectre de certaines mosa\"{i}ques poissoniennes du plan et
  l'enveloppe convexe du pont brownien.
\newblock {\em Probab. Theory Related Fields}, 105(1):57--83, 1996.

\bibitem{Gradshteyn_Ryzhik}
I.~S. Gradshteyn and I.~M. Ryzhik.
\newblock {\em Table of integrals, series, and products}.
\newblock Elsevier/Academic Press, Amsterdam, eighth edition, 2015.
\newblock Translated from the Russian, Translation edited and with a preface by
  Daniel Zwillinger and Victor Moll.

\bibitem{HaasMallein}
B.~Haas and B.~Mallein.
\newblock Fragmentation processes and the convex hull of the {B}rownian motion
  in the disk.
\newblock {\em Ann. H. Lebesgue}, 8:219--253, 2025.

\bibitem{Hsiung}
C.~C. Hsiung.
\newblock {\em A first course in differential geometry}.
\newblock Pure and Applied Mathematics. John Wiley \& Sons, Inc., New York,
  1981.
\newblock A Wiley-Interscience Publication.

\bibitem{Jovalekic}
M.~Jovaleki\'{c}.
\newblock Lower bound for the diameter of planar {B}rownian motion.
\newblock {\em Bull. Math. Soc. Sci. Math. Roumanie (N.S.)},
  64(112)(3):281--284, 2021.

\bibitem{Kabluchko-Zapor-TAMS}
Z.~Kabluchko and D.~Zaporozhets.
\newblock Intrinsic volumes of {S}obolev balls with applications to {B}rownian
  convex hulls.
\newblock {\em Trans. Amer. Math. Soc.}, 368(12):8873--8899, 2016.

\bibitem{Lawler}
G.~F. Lawler.
\newblock {\em Conformally invariant processes in the plane}, volume 114 of
  {\em Mathematical Surveys and Monographs}.
\newblock American Mathematical Society, Providence, RI, 2005.

\bibitem{Letac}
G.~Letac.
\newblock Advanced problem 6230.
\newblock {\em Amer. Math. Monthly}, 85(3):686, 1978.

\bibitem{Majumdar}
S.~N. Majumdar, A.~Comtet, and J.~Randon-Furling.
\newblock Random convex hulls and extreme value statistics.
\newblock {\em J. Stat. Phys.}, 138(6):955--1009, 2010.

\bibitem{MX}
J.~McRedmond and C.~Xu.
\newblock On the expected diameter of planar {B}rownian motion.
\newblock {\em Statist. Probab. Lett.}, 130:1--4, 2017.

\bibitem{Metzler}
A.~Metzler.
\newblock On the first passage problem for correlated {B}rownian motion.
\newblock {\em Statist. Probab. Lett.}, 80(5-6):277--284, 2010.

\bibitem{Molchanov-Wespi}
I.~Molchanov and F.~Wespi.
\newblock Convex hulls of {L}\'evy processes.
\newblock {\em Electron. Commun. Probab.}, 21:Paper No. 69, 11, 2016.

\bibitem{MortersPeres}
P.~M\"{o}rters and Y.~Peres.
\newblock {\em Brownian motion}, volume~30 of {\em Cambridge Series in
  Statistical and Probabilistic Mathematics}.
\newblock Cambridge University Press, Cambridge, 2010.
\newblock With an appendix by Oded Schramm and Wendelin Werner.

\bibitem{star_hull}
H.~Panzo.
\newblock Expected area of the star hull of planar {B}rownian motion and
  bridge.
\newblock arXiv:2602.10974, 2026.

\bibitem{hi_dim_hulls}
H.~Panzo and E.~Socher.
\newblock Bounds on some geometric functionals of high dimensional {B}rownian
  convex hulls and their inverse processes.
\newblock {\em Canad. Math. Bull.}, 69(1):222--235, 2026.

\bibitem{Richard}
C.~Richard.
\newblock Area distribution of the planar random loop boundary.
\newblock {\em J. Phys. A}, 37(16):4493--4500, 2004.

\bibitem{sebek}
S.~{\v{S}}ebek.
\newblock Convex hull of {B}rownian motion and {B}rownian bridge.
\newblock {\em Markov Process. Related Fields}, 30(4):459--475, 2024.

\bibitem{Takacs}
L.~Tak\'{a}cs.
\newblock Expected perimeter length.
\newblock {\em Amer. Math. Monthly}, 87(2):142, 1980.

\bibitem{TsukermanVeomett}
E.~Tsukerman and E.~Veomett.
\newblock Brunn-{M}inkowski theory and {C}auchy's surface area formula.
\newblock {\em Amer. Math. Monthly}, 124(10):922--929, 2017.

\end{thebibliography}

\end{document}